\journal{Journal of \LaTeX\ Templates} 
\tikzset{>=latex}
\DeclareMathAccent{\wtilde}{\mathord}{largesymbols}{"65}
\newcommand{\NN}{\mathbb{N}}
\newcommand{\RR}{\mathbb{R}}
\newcommand{\E}{\mathbb{E}}
\newcommand{\hx}{\hat{x}}
\let\leqx\leqslant
\newcommand{\doasympleqx}{%
	\hbox{\ooalign{%
			\noalign{\kern.25ex}
			$\leqslant$\cr
			\noalign{\kern1.25ex}
			\smash{$\sim$}\cr
	}}%
}
\newcommand{\doasympdomleq}{%
	\hbox{\ooalign{%
			\noalign{\kern.25ex}
			$\preccurlyeq$\cr
			\noalign{\kern1.25ex}
			\smash{$\sim$}\cr
	}}%
}
\newcommand{\doasympasympdomleq}{%
	\hbox{\ooalign{%
			\noalign{\kern.25ex}
			$\preccurlyeq$\cr
			\noalign{\kern1.25ex}
			\smash{$\sim$}\cr
			\noalign{\kern0.5ex}
			\smash{$\sim$}\cr
	}}%
}
\let\leqx\leqslant
\newcommand{\doasympgeqx}{%
	\hbox{\ooalign{%
			\noalign{\kern.25ex}
			$\geqslant$\cr
			\noalign{\kern1.25ex}
			\smash{$\sim$}\cr
	}}%
}
\newcommand{\be}{\begin{equation}}
\newcommand{\ee}{\end{equation}}
\newcommand{\doasympasympleqx}{%
	\hbox{\ooalign{%
			\noalign{\kern.25ex}
			$\leqx$\cr
			\noalign{\kern1.25ex}
			\smash{$\sim$}\cr
			\noalign{\kern0.5ex}
			\smash{$\sim$}\cr
	}}%
}
\newtheorem{Theorem}{Theorem}
\newtheorem{Definition}[Theorem]{Definition}
\newtheorem{Lemma}[Theorem]{Lemma}
\newtheorem{Remark}[Theorem]{Remark}
\newtheorem{Assumptions}[Theorem]{Assumptions}
\def\C{{\mathcal C}}
\def\D{{\mathcal D}}
\def\E{{\mathcal E}}
\def\G{{\mathcal G}}
\def\M{{\mathcal M}}
\def\P{{\mathcal P}}
\def\S{{\mathcal S}}
\def\U{{\mathcal U}}
\def\X{{\mathcal X}}
\def\Y{{\mathcal Y}}
\def\NN{{\mathbb N}}
\def\RR{{\mathbb R}}
\def\CH{{\mathcal CH}(\X,\Y)}
\def\CHc{{\mathcal CH}_c(\X,\Y)}
\def\mA{\bm{\mathrm{A}}}
\def\mC{\bm{\mathrm{C}}}
\newcommand{\rem}[1]{}
\begin{document}
\begin{frontmatter}

\title{On the Semi-Decidability of Remote State Estimation and Stabilization via Noisy Communication Channels}

\author{Holger Boche}
\address{Chair of Theoretical Information Technology\\ 
	                  Technical University of Munich\\
                    D-80333 Munich, Germany\\
                    and\\
                    Munich Center for Quantum Science\\ and Technology (MCQST)\\ D-80799 Munich, Germany\\
                    Email: boche@tum.de}

\author{Yannik B\"ock}
\address{Theoretical Information Technology\\ 
	                  Technical University of Munich\\
                    D-80333 Munich, Germany\\
                    Email: yannik.boeck@tum.de}   
										
\author{Christian Deppe}
\address{Institute for Communications Engineering\\ 
	Technical University of Munich\\
	D-80333 Munich, Germany\\
	Email: christian.deppe@tum.de}
																				


\begin{abstract}

We consider the task of remote state estimation and stabilization of disturbed linear plants via noisy communication channels. In 2007 Matveev and Savkin established a surprising link between this problem and Shannon's theory of zero-error communication. By applying very recent results of computability of the channel reliability function and computability of the zero-error capacity of noisy channels by Boche and Deppe, we analyze if, on the set of linear time-invariant systems paired with a noisy communication channel, it is uniformly decidable by means of a Turing machine whether remote state estimation and stabilization is possible. The answer to this question largely depends on whether the plant is disturbed by random noise or not. Our analysis incorporates scenarios both with and without channel feedback, as well as a weakened form of state estimation and stabilization. In the broadest sense, our results yield a fundamental limit to the capabilities of computer-aided design and autonomous systems, assuming they are based on real-world digital computers.
\end{abstract}

\begin{keyword}
state estimation problem, stabilization problem computability, disturbed plant, zero-error feedback capacity
\end{keyword}

\end{frontmatter}

\section{Introduction}
We consider the problem of remote state estimation and stabilization of an unstable, linear, time-invariant plant via a noisy communication channel \cite{MS07}. Schematically, the set-up is depicted in Figure \ref{fig:GeneralSchematics}. The status of an unstable, linear plant is observed by a local sensor. Subsequently, the sensor data is feed into an encoder that prepares the data for transmission trough a discrete, memoryless channel. Based on the channel output, the remote decoder/estimator tries to estimate the current status of the plant. Depending on the specific scenario, a feedback-link to enhance the transmission quality and/or a control-link to stabilize the plant may additionally be available.

\begin{figure}[htbp!]\linespread{1}
    \centering
    \begin{tikzpicture}
            \draw[dashed, very thick, ->, shorten >=1.5pt] (-4,1.25) -- (-3,1.25); 
            \draw[dashed, very thick, ->, shorten >=1.5pt] (-4.8,1.25) -- (-4,1.25); 
            \draw (-4,1.25)  node[fill = white, anchor=center, align=center]
            {\(\zeta(n)\)};
            \draw[ultra thick, fill=black!20] (-3,1.5) rectangle (-1,0.5);
            \draw (-2,1)  node[anchor=center] {\(\mA\)};
            \draw (-2,1.5)  node[anchor=south] {Plant};
            \draw[very thick, ->, shorten >=1.5pt, shorten <=1.5pt] (-1,1) -- (1,1); 
            \draw (0,1)  node[fill = white, anchor=center]
            {\(x(n)\)};
            \draw[ultra thick, fill=black!20] (3,1.5) rectangle (1,0.5);
            \draw (2,1)  node[anchor=center] {\(\mC\)};
            \draw (2,1.5)  node[anchor=south] {Sensor};
            
            \draw[very thick, <-, shorten >=1.5pt, shorten <=1.5pt] (4,-1) -- (4,1) -- (3,1); 
            \draw (4,0)  node[fill = white, anchor=center, align=center]
            {\(y(n)\)};
            \draw[ultra thick, fill=black!20] (4.5,-2) rectangle (2.5,-1);
            \draw (3.5,-1.5)  node[anchor=center] {\(\E\)};
            \draw (3.5,-2)  node[anchor=north] {Encoder};
            \draw[very thick, ->, shorten >=1.5pt, shorten <=1.5pt] (2.5,-1.5) -- (0.7,-1.5); 
            \draw (1.65,-1.5)  node[fill = white, anchor=center, align=center]
            {\(e(n)\)};
            \fill[fill = black!20] (-1,-1.5) -- (-0.7, -1.1) -- (1,-1.1) -- (0.7,-1.5) -- (1,-1.9) -- (-0.7,-1.9);
            \draw (0,-1.5)  node[anchor=center] {\(W\)};
            \draw (0,-2)  node[anchor=north] {DMC};
            \draw[very thick, ->, shorten >=1.5pt, shorten <=1.5pt] (-1,-1.5) -- (-2.5,-1.5); 
            \draw (-1.65,-1.5)  node[fill = white, anchor=center, align=center]
            {\(s(n)\)};
            \draw[ultra thick, fill=black!20] (-4.5,-2) rectangle (-2.5,-1);
            \draw (-3.5,-1.5)  node[anchor=center] {\(\D\)};
            \draw (-3.5,-2)  node[anchor=north] {Decoder/Estimator};
            \draw[dashed, very thick, ->, shorten >=1.5pt, shorten <=1.5pt] (-4,-1) -- (-4,0.75) -- (-3,0.75); 
            \draw (-4,0)  node[fill = white, anchor=center, align=center]
            {Control-\\link};
            \draw[dashed, very thick, ->, shorten >=1.5pt, shorten <=1.5pt] (-3,-1) -- (-3,-0.5) -- (3,-0.5) -- (3,-1); 
            \draw (0,-0.5)  node[fill = white, anchor=center, align=center]
            {Feedback-link};
            \draw[dashed, very thick, ->, shorten >=1.5pt, shorten <=1.5pt] (-4.5,-1.5) -- (-5.5,-1.5) -- (-5.5,1);
            \draw (-5.5,0)  node[fill = white, anchor=center, align=center]
            {\(\hat{x}(n)\)};
    \end{tikzpicture}
    \caption{General schematics of the remote state estimation and stabilization problem. Different scenarios investigated in this work are indicated by dashed lines.}
    \label{fig:GeneralSchematics}
\end{figure}
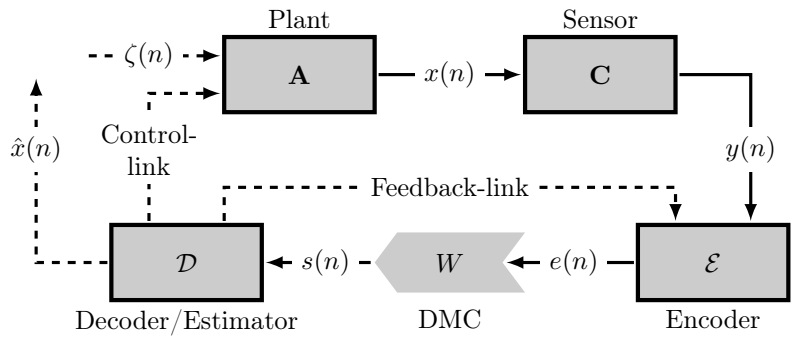




\rem{The problems of state estimation and stabilization of unstable linear plants are of great importance in control theory. 
We consider the problem of state estimation
for a discrete linear partially observed time invariant system (see \cite{MS07}).
We assume that the estimate is required at a remote
location, and the sensor data are transmitted to the estimator
via a finite capacity digital communication channel. 
We consider a discrete memoryless channel.}

The strong objective takes the form of almost
sure stability/observability: the error must be kept small
along almost all possible trajectories, i.e., with probability one.
Problems of this kind play an essential role in control theory \cite{BL00, EM01, HOV02, IF02, HT05, HF02, L03, MS04, MS05, MS05a, MS05b, PS01, S06, SP03, TM04, TM04b, WB99}.
In all the cited papers it is assumed that either the communication channels
or the plants are noiseless. However, this means that we are neglecting any uncertainty of the system. In \cite{MS07} the more realistic situation is considered that the plant and channel are noisy. 
A remote control system is examined more precisely where the sensor signals are transmitted over a noisy channel to the controller observer. 
As channel model, they use the discrete memoryless channel (DMC) introduced by Shannon in \cite{S48}. The linear plant
is affected by additive stochastic disturbances, which are
independent of the channel noise. 

In \cite{MS07} a description of this model is given. 
They show that the capability of the noisy channel to
ensure almost sure stabilizability and observability of the
plant is identical to its capability to transmit information
with the zero probability of error. 
This requires the zero-error capacity that Shannon introduced in \cite{S56}. In the paper \cite{BD20Z} the computability of the zero-error capacity is considered.

In \cite{MS07} it is shown that the
boundary of the almost sure stabilizability and observability
domain is given by the channel zero error capacity.
Furthermore, in the
face of channel errors, external disturbances obeying
a common and arbitrarily small deterministic bound at
any sample and time unavoidably accumulate and cause,
sooner or later, arbitrarily large stabilization and estimation
errors. If the zero-error capacity of a DMC is zero, 
an asymptotically
unstable linear plant can be neither stabilized nor
observed with a bounded error over such a DMC.

\rem{Both the state estimation problem and the plant stabilization problem are modeled in \cite{MS07} using a noisy channel, the DMC $W$. We consider the plant stabilization problem with and without feedback. For both problems we have the dynamic system, i.e. the plant is described by dynamic parameters that completely determine the system behavior and the noisy channel is described by communication parameters that completely determine the channel.}\rem{In this paper we want to find out whether the two computable problems can be solved algorithmically. As a computability model, we consider Turing machines with Turing computability. All algorithms that can be computed with computers are Turing computable.}

\rem{We want to answer whether a Turing machine $TM_{SE}$ exists with only one output state "stop", which receives the dynamic parameters of the plan as input and those of the channel, which stops exactly when the state estimation problem can be solved. Furthermore, we consider the question of whether there is a Turing machine that outputs for the problem of stability whether the problem is solvable or not.}

In the scope of this work, we investigate the remote state estimation and stabilization problem with regards to algorithmic decidability. In particular, we analyze if there exists Turing machines that, given the parameters of the state estimation an stabilization set-up as an input, reach their halting state in a finite number of steps if and only if remote state estimation and stabilization is possible.
In particular, the Turing machine is thus not required to halt for every input. Any algorithm that can be executed by a real-world digital computer can, in theory, be simulated by a Turing Machine.

If the remote state estimation and stabilization problem is undecidable on a Turing Machine, it is certainly not decidable on any real-word digital computer,
which poses a severe drawback for computer-aided design and autonomous systems. Computer-aided design tools commonly feature an \emph{exit flag} functionality that serves as an indicator of success upon termination of the automated design process. Depending on the exit flag, the return value of the executed algorithm is marked to be either a valid solution for the design problem or not. In our context, a hypothetical automated design tool would be presented with description \((\mA,\mC,W)\) of the set-up depicted in Figure
\ref{fig:GeneralSchematics}, and return a triple \(\E,\D, f\) consisting of an encoder, a decoder/estimator and a boolean exit flag \(f\). If the exit flag equals \emph{true}, then the pair \((\E,\D)\) is a valid solution for the state estimation problem characterized by \((\mA,\mC,W)\). If the exit flag equals \emph{false}, the algorithm was unable to solve the state estimation problem 
and the pair \((\E,\D)\) is some arbitrary dummy value.
If, however, the state estimation problem is undecidable on a Turing machine, an automated design tool that satisfies this requirement cannot exist

A similar problem arises for scenarios where autonomous adaption to changes in the system dynamics are required, as it is the case in wireless communication. Consider therefore a mobile dynamic system, e.g., a satellite or a drone, that is accessible to a base station via a wireless data link. While the parameters of the mobile system may be known in advance, the characteristics of the wireless transmission is likely to change over time. Assuming the channel remains constant during each data frame, the mobile system and the base station have to agree on a new pair \((\E,\D)\) for each transmission. If the state estimation problem is not solvable in an automated way, this task can not be accomplished.

\rem{In section~\ref{Basic} we introduce the basic definitions and results of computational theory, channel coding, zero-error information theory, and control theory.
Following Matveev and Savkin \cite{MS07} and Liberzon \cite{L03,L03a}, the state estimation problem in section~\ref{State}. 
and without feedback. We give sufficient conditions, when the state 
estimation problem can be solved. In section~\ref{Stab} we analyze the 
algorithmic solvability of the state estimation problem, i.e. semi-decidability on a Turing machine. We show that
the set of solvable state estimation with feedback is not semi-decidable and the set of non-solvable state estimation with feedback is semi-decidable.
We do this analyses also for the stabilization problem with feedback in section~\ref{Stab}. Finally, we consider in section~\ref{Weak} weaker 
stabilization and state estimation requirements and show that in this
case the stable and un-stable sets are semi-decidable.}

\rem{In section~\ref{Basic} we introduce the basic definitions and results of computational theory, channel coding, zero-error information theory, and control theory.
We consider following Matveev and Savkin \cite{MS07} and Liberzon \cite{L03,L03a}, in section~\ref{State} the state estimation problem with
and without feedback. We give sufficient conditions, when the state 
estimation problem can be solved. In section~\ref{Algo} we analyze the 
algorithmic solvability of the state estimation problem, i.e. semi-decidability on a Turing machine. We show that
the set of solvable state estimation with feedback is not semi-decidable and the set of non-solvable state estimation with feedback is semi-decidable.
We do this analyses also for the stabilization problem with feedback in section~\ref{Stab}. Finally, we consider in section~\ref{Weak} weaker 
stabilization and state estimation requirements and show that in this
case the stable and un-stable sets are semi-decidable.}

\begin{figure}[htbp!]
    \centering
            \begin{tikzpicture}\linespread{1}
            \draw[ultra thick, fill=black!20] (-4.5,2) rectangle (4.5,-2);
            \draw[ultra thick] (-4.5,0) -- (4.5,0);
            \draw[ultra thick] (0,0) -- (0,2); 
            \draw (-2.25,1)  node[align = left, anchor=center]  {\(\bullet\) \textit{Section \ref{State}}\\
                \(\bullet\) Solvable: If \(\eta < C_0^{FB}\)\\
                \(\bullet\) \textbf{Semi-decidable: No}};
            \draw (2.25,1)  node[align = left, anchor=center]  {\(\bullet\) \textit{Section \ref{State}}\\
                \(\bullet\) Solvable: If \(\eta < C_0^{\phantom{FB}}\)\\
                \(\bullet\) \textbf{Semi-decidable: No}};
            \draw (0,-1)  node[align = left, anchor=center]  {\(\bullet\) \textit{Section \ref{Stab}}\\
                 \(\bullet\) Possible: If \(\eta < C_0^{FB}\)\\
                 \(\bullet\) \textbf{Semi-decidable: No}};
            \draw (-4.5,-2)  node[align = right, anchor=east]  {\small \textit{Control-}\\
                \small \textit{link}};
            \draw (4.5,-2)  node[align = left, anchor=west]  {\small \textit{Plant dis-}\\
                \small\textit{turbance}};
            \draw (-4.5,-1)  node[align = right, anchor=east]  {\small Yes};
            \draw (4.5,1)  node[align = left, anchor=west]  {\small Yes};
            \draw (4.5,-1)  node[align = left, anchor=west]  {\small Yes};
            \draw (-4.5,1)  node[align = right, anchor=east]  {\small No};
            \draw (0,2)  node[align = center, anchor=south]  {\small\textit{Feedback-}\\
                \small \textit{link}};
            \draw (-2,2)  node[align = center, anchor=south]  {\small Yes};
            \draw (2,2)  node[align = center, anchor=south]  {\small No};
            \begin{scope}[yshift = -3cm]
                \draw[ultra thick, fill=black!20] (-4.5,1) rectangle (4.5,-1);
                \draw (0,0)  node[align = left, anchor=center]  {\(\bullet\) \textit{Section \ref{Weak}}\\
                    \(\bullet\) Solvable: If \(\eta < C^{\phantom{FB}}\)\\
                    \(\bullet\) \textbf{Semi-decidable: Yes}};
                \draw (-4.5,0)  node[align = right, anchor=east]  {\small Yes/ \\ \small No};
                \draw (4.5,0)  node[align = left, anchor=west]  {\small No};
            \end{scope}
            \end{tikzpicture}
        \caption{Schematic outline of the paper. Results of this work are highlighted in bold font.}
    \label{fig:Overview}
\end{figure}

\begin{figure}[htbp!]
    \centering
            \begin{tikzpicture}\linespread{1}
            \draw[ultra thick, fill=black!20] (-4.5,2) rectangle (4.5,-2);
            \draw[ultra thick] (-4.5,0) -- (4.5,0);
            \draw[ultra thick] (0,0) -- (0,2); 
            \draw (-2.25,1)  node[align = left, anchor=center]  {\(\bullet\) \textit{Section \ref{State}}\\
                \(\bullet\) Unsolvable: If \(C_0^{FB} < \eta\)\\
                \(\bullet\) \textbf{Semi-decidable: Yes}};
            \draw (2.25,1)  node[align = left, anchor=center]  {\(\bullet\) \textit{Section \ref{State}}\\
                \(\bullet\) Unsolvable: If \(C_0 < \eta\)\\
                \(\bullet\) Semi-decidable: Open};
            \draw (0,-1)  node[align = left, anchor=center]  {\(\bullet\) \textit{Section \ref{Stab}}\\
                 \(\bullet\) Impossible: If \(C_0^{FB} < \eta  \)\\
                 \(\bullet\) \textbf{Semi-decidable: Yes}};
            \draw (-4.5,-2)  node[align = right, anchor=east]  {\small \textit{Control-}\\
                \small \textit{link}};
            \draw (4.5,-2)  node[align = left, anchor=west]  {\small \textit{Plant dis-}\\
                \small\textit{turbance}};
            \draw (-4.5,-1)  node[align = right, anchor=east]  {\small Yes};
            \draw (4.5,1)  node[align = left, anchor=west]  {\small Yes};
            \draw (4.5,-1)  node[align = left, anchor=west]  {\small Yes};
            \draw (-4.5,1)  node[align = right, anchor=east]  {\small No};
            \draw (0,2)  node[align = center, anchor=south]  {\small\textit{Feedback-}\\
                \small \textit{link}};
            \draw (-2,2)  node[align = center, anchor=south]  {\small Yes};
            \draw (2,2)  node[align = center, anchor=south]  {\small No};
            \begin{scope}[yshift = -3cm]
                \draw[ultra thick, fill=black!20] (-4.5,1) rectangle (4.5,-1);
                \draw (0,0)  node[align = left, anchor=center]  {\(\bullet\) \textit{Section \ref{Weak}}\\
                    \(\bullet\) Unsolvable: If \(C < \eta \)\\
                    \(\bullet\) \textbf{Semi-decidable: Yes}};
                \draw (-4.5,0)  node[align = right, anchor=east]  {\small Yes/ \\ \small No};
                \draw (4.5,0)  node[align = left, anchor=west]  {\small No};
            \end{scope}
            \end{tikzpicture}
        \caption{Schematic outline of the paper. Results of this work are highlighted in bold font.}
    \label{fig:OverviewVer2}
\end{figure}

Applying the results from \cite{BD20Z} and \cite{BD20R}, we investigate different variants of the remote state estimation and stabilization problem with respect to decidability, as visualized in Figure \ref{fig:Overview} and Figure \ref{fig:OverviewVer2}. The outline of the may be summarized as follows:
In Section \ref{Basic}, we discuss the preliminaries on \emph{computability theory}, \emph{channel coding} and \emph{control theory} which will be applied subsequently. In particular, we introduce a formal mathematical description for the
    set-up depicted in Figure \ref{fig:GeneralSchematics}.
    Section \ref{State} discusses the remote state estimation problem for disturbed plants in the absence of a control-link. The state estimation is required to satisfy the objective of almost sure observability. We discuss scenarios both with and without feedback-link, follwoing Matveev and Savkin \cite{MS07} as well as Liberzon \cite{L03,L03a}.
    In section \ref{Stab}, we discuss the remote stabilization problem for disturbed plants. Again, we discuss scenarios both with and without feedback-link.
    In section \ref{Weak}, we discuss the remote state estimation and stabilization problem for undisturbed plants, again both with and without feedback-link.
    The paper concludes in Section \ref{Conclusions} with a summary and subsumption of our work.

\begin{Remark}
    The concepts of \emph{decidability} and \emph{semi-decidability} will be formally introduced in Section \ref{Basic}. Throughout this work, we prove results concerning the semi-decidability of
    of specific classes of pairs \((\mA,W)\). Strictly speaking, our results are stronger than the results depicted in Figure \ref{fig:Overview}, since the non-semi-decidability of a set immediately implies its non-decidability.
\end{Remark}

\section{Basic definitions and results}\label{Basic}
In this section we will introduce the basic definitions and results that we will need for our investigations. We want to examine a problem from the control problem for its algorithmic solvability. Therefore, we start with the basics of computability theory and continue with the basics of channel coding and zero-error theory. These are required in the control theoretical model that we introduce afterwards.

\subsection{Basics of computational theory}

In this section we formally introduce the concepts of computability (Turing, \cite{T36}), effective analysis, and the concepts of the zero-error capacity (Shannon, \cite{S56}).

A partial function from $\NN$ to $\NN$ is called partial recursive if it can be computed by a Turing machine; that is, if there exists a Turing machine that accepts input $x$ exactly if $f(x)$ is defined, and, upon acceptance, leaves the string $f(x)$ on its tape \cite{Soa87}.

We would like to make statements about the computability of the zero-error capacity. This capacity is generally a real number. 
Therefore, we first define when a real number is computable. For this we need the following two definitions.

We use the concepts of recursive functions (see \cite{Go30,Go34,Kle52, Min61}) 
and computable numbers (see \cite{PoRi17,W00}).

\begin{Definition}\label{ber}
A sequence of rational numbers $\{r_n\}_{n\in\NN}$ is called a computable sequence if  there  exist  partial recursive  functions $a,b,s:\NN\to\NN$ 
with $b(n)\not = 0$ for all $n\in\NN$ and 
\[
r_n= (-1)^{s(n)}\frac {a(n)}{b(n)},\ \    n\in\NN. 
\]
\end{Definition}
\begin{Definition}\label{compreal}
A real number $x$ is said to be computable if there exists a computable sequence of rational numbers $\{r_n\}_{n\in\NN}$ such that $|x-r_n|<2^{-n}$ 
for all $n\in\NN$. We denote the set of computable real numbers by $\RR_c$.
\end{Definition}

\begin{Definition}
 	A sequence of functions $\{F_n\}_{n\in\NN}$ with $F_n:\X\to \mathbb{R}_{c}$ is computable if the mapping $(i,x)\to F_i(x)$ is computable. 
 \end{Definition}
 
 \begin{Definition}
	A computable sequence of computable functions  $\{F_N\}_{N\in\NN}$ is called computably convergent to $F$ if there exists a partial recursive 
	function $\phi:\NN\times X\to \NN$, such that
	\[
	\left| F(x)-F_N(x)\right|<\frac 1{2^M}
	\]
	holds true for all $M\in\NN$, all $N\geq\phi(M,x)$ and all $x\in X$.
	\end{Definition}
\begin{Remark}
A number $x\in\RR_c$ is computable if and only if there is a computable sequence $\{r_n\}_{n\in\NN}$ of rational numbers and a partial recursive function $\phi:\NN\to\NN$, such that 
\[
|x-r_n|<\frac 1{2^M}
\]
holds true for all $M\in\NN$ and all \(n \geq \phi(M)\). 
\end{Remark}

For further investigations, we also need even weaker computing concepts. Here we consider Turing machines with only one output state. We interpret this output status as the stopping of the Turing machine. This means that for an input \(x \in \mathbb{R}_{c} \), the Turing machine \(TM(x) \) ends its calculation after an unknown
     but finite number of arithmetic steps, or it computes forever.
    
    \begin{Definition}\label{semi}
        We call a set \(\mathcal{M} \subseteq \mathbb{R}_{c} \) semi-decidable
         if there is a Turing machine 
         \( TM_{\mathcal{M}} \) that stops for the input \( x \in \mathbb{R}_{c}\), if and only if \( x \in \mathcal{M} \) applies.
    \end{Definition}

Specker constructed in \cite{S50} a monotonically increasing computable sequence $\{r_n\}_{n\in\NN}$ of rational numbers that is bounded by 1, but the limit $x^*$, which naturally exists, is not a computable number. 
For all $M\in\NN$ there exists $n_0=n_0(M)$ such that for all $n\geq n_0$, $0\leq x-r_n <\frac 1{2^M}$ always 
holds, but the function $n_0:\NN\to\NN$ is not
partial recursive.
This means there are computable monotonically increasing sequences of rational numbers, which each converge to a finite limit value, but for which the limit values are not computable numbers and therefore the convergence is not effective.
Of course, the set of computable numbers is countable.

\begin{Definition}\label{Mazur}
    A function $f : \RR_c \to \RR_c$ is called Banach-Mazur computable if $f$ maps any given computable sequence
$\{x_n\}_{n=1}^\infty$ of real numbers into a computable sequence
$\{f(x_n)\}_{n=1}^\infty$ of real numbers.
\end{Definition}
\begin{Definition}\label{Borel}
A function $f : \RR_c\to \RR_c$ is called Borel-Turing
computable if there is an algorithm that transforms each
given computable sequence of a computable real $x$ into a
corresponding representation for $f(x)$.
\end{Definition}

\begin{Remark}
    The concatenation of Borel-Turing computable functions is again a Borel-Turing computable function.
\end{Remark}

We note that Turing’s original definition of computability conforms
to the definition of Borel-Turing computability above. Banach-Mazur  computability (see Definition~\ref{Mazur})
is the weakest form of computability. For an
overview of the logical relations between different notions
of computability we again refer to \cite{AB14}.

\subsection{Basics of channel coding and zero-error theory}

 In the theory of transmission, the receiver must be in a position to
successfully decode all the messages transmitted by the sender.

Let $\X$ be a finite alphabet. We denote the set of probability distributions by $\P(\X)$.
We define the set of computable
probability distributions $\P_c(\X)$ as the set of all probability distributions $P\in\P(X)$ such that $P(x)\in \RR_c$ for all 
$x\in\X$.
Furthermore, for finite alphabets $\X$ and $\Y$, let $\CH$ be the set of all conditional probability
distributions (or channels) $P_{Y|X} : \X \to \P(\Y)$.
$\CHc$ denotes the set of all computable conditional probability
distributions, i.e., $P_{Y|X} (\cdot|x) \in \P_c(\Y)$ 
for every $x\in\X$.

 Let $M\subset\CHc$. We call $M$ semi-decidable (see Definition~\ref{semi}) if and
    only if there is a Turing machine $TM_M$ that either stops or computes forever, depending on whether $W\in M$ is true. That means $TM_M$ accepts exactly the elements of $M$ and calculates forever for an input $W\in M^c=\CHc\setminus M$.

\begin{Definition}
	A discrete memoryless channel (DMC) 
	is a triple $(\X,\Y,W)$, where $\X$ is the finite input alphabet, 
	$\Y$ is the finite output alphabet, and 
	$W(y|x)\in\CH$ with $x\in\X$, $y\in\Y$.
	The probability for a sequence $y^n\in\Y^n$ to be received if 
	$x^n\in\X^n$ was sent is defined by
	$$
	W^n(y^n|x^n)=\prod_{j=1}^n W(y_j|x_j).
	$$
\end{Definition}

\begin{Definition}
    A block code $\C$ with rate $R$ and block length $n$ consists of 
    \begin{itemize} 
    \item A message set $\M=\{ 1,2,...,M \}$ with $M=2^{nR}\in\NN$.
    \item An encoding function $e:\M\to \X^n$.
    \item A decoding function $d:\Y^n\to\M$.
    \end{itemize}
    We call such a code an $(R,n)$-code.
\end{Definition}


\begin{Definition}\mbox{}
\begin{enumerate}
    \item 
    The individual message probability of    error is defined by the conditional probability of error given that message $m$ is transmitted: \[
    P_{m}(\C)=Pr\{d(Y^n)\neq m|X^n=e(m)\}.
    \]
    \item We define the maximal probability of error by $P_{\max}(\C)=\max_{m\in\M} P_{m}(\C)$.
    
    \item A rate $R$ is said to be achievable if there exist a sequence of $(R,n)$-codes $\{\C_n\}$ with  probability  of  error $P_{\max}(\C_n)\to 0$ as $n\to \infty$.
    \end{enumerate}
\end{Definition}

Two sequences $x^n$ and $x'^n$ of size $n$ of input variables are distinguishable by a receiver if  the vectors $W^n(\cdot|x^n)$ and $W^n(\cdot|x'^n)$ are orthogonal. That means if $W^n(y^n|x^n)>0$ then $W^n(y^n|x'^n)=0$
and if $W^n(y^n|x'^n)>0$ then $W^n(y^n|x^n)=0$.
We denote by $M(W,n)$ the maximum cardinality of a set of mutually orthogonal vectors among the $W^n(\cdot|x^n)$ with $x^n\in\X^n$.

There are different ways to define the capacity of a channel. The so-called pessimistic  capacity is defined as $\liminf_{n\to\infty} \frac {\log_2 M(W,n)}n$
and the optimistic capacity is defined as $\limsup_{n\to\infty} \frac {\log_2 M(W,n)}n$. A discussion about these quantities can be found in \cite{A06}. 
We define
the zero-error capacity of $W$ as follows.
	\[
	C_0(W) = \liminf_{n\to\infty} \frac {\log_2 M(W,n)}n
	\]
For the zero-error capacity, the pessimistic capacity and the optimistic capacity are equal.

The zero-error capacity can be characterized in graph-theoretic terms as well. Let $W\in\CH$ be given and $|\X|=q$.
	Shannon \cite{S56} introduced the confusability graph $G_W$ with $q=|G|$. In this graph, two letters/vertices $x$ and $x'$ are connected, if one could be confused
	with the other due to the channel noise (i.e. there does exist a $y$ such that $W(y|x)>0$ and $W(y|x')>0$).
	Therefore, the maximum independent set is the maximum number of single-letter messages which can be sent without danger of confusion.  
	In other words, the receiver knows whether the received message is correct or not.
		It follows that $\alpha(G)$ is the maximum number of messages which can be sent without danger of confusion.  
		Furthermore, the definition is extended to words of length $n$ by
		$\alpha(G^{\boxtimes n})$. Therefore, we can give the following graph-theoretic definition of the Shannon capacity.
\begin{Definition} 
	The Shannon capacity of a graph $G\in\G$ is defined by
	\[
	\Theta(G) \coloneqq \limsup_{n\to\infty} \alpha(G^{\boxtimes n})^{\frac 1n}.
	\]
\end{Definition}
Shannon discovered the following.
\begin{Theorem}[Shannon \cite{S56}]\label{Shannon}
Let $(\X,\Y,W)$ be a DMC. Then 
 	\[
 	2^{C_0(W)}=\Theta(G_W)=\lim_{n \to \infty} \alpha(G_W^{\boxtimes n})^{\frac 1n}.
	\]
 \end{Theorem}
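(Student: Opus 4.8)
The plan is to prove the equality $2^{C_0(W)}=\Theta(G_W)=\lim_{n\to\infty}\alpha(G_W^{\boxtimes n})^{1/n}$ by first reconciling the channel-theoretic quantity $M(W,n)$ with the graph-theoretic quantity $\alpha(G_W^{\boxtimes n})$, and then handling the convergence of the normalized sequence via a supermultiplicativity (Fekete) argument.

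First I would establish the single-block identity $M(W,n)=\alpha(G_W^{\boxtimes n})$ for every $n\in\NN$. The key observation is that two input sequences $x^n,x'^n\in\X^n$ produce orthogonal output distributions $W^n(\cdot|x^n)\perp W^n(\cdot|x'^n)$ if and only if there is at least one coordinate $j$ in which $x_j$ and $x_j'$ are non-adjacent in $G_W$ (i.e.\ not confusable); indeed $W^n(y^n|x^n)W^n(y^n|x'^n)=\prod_j W(y_j|x_j)W(y_j|x'_j)$, and this product is positive for some $y^n$ exactly when every coordinate pair $(x_j,x_j')$ is confusable, which by definition of the strong product $\boxtimes$ is precisely the statement that $x^n$ and $x'^n$ are adjacent or equal in $G_W^{\boxtimes n}$. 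Hence a set of pairwise-orthogonal output vectors corresponds exactly to an independent set in $G_W^{\boxtimes n}$, giving $M(W,n)=\alpha(G_W^{\boxtimes n})$.

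Next I would address the limit. Since $C_0(W)=\liminf_{n\to\infty}\frac{\log_2 M(W,n)}{n}=\liminf_{n\to\infty}\frac{\log_2\alpha(G_W^{\boxtimes n})}{n}$, it suffices to show the $\liminf$ is actually a genuine limit and equals $\log_2\Theta(G_W)=\limsup_{n\to\infty}\frac1n\log_2\alpha(G_W^{\boxtimes n})$. This follows from supermultiplicativity: concatenating an independent set of size $\alpha(G_W^{\boxtimes m})$ with one of size $\alpha(G_W^{\boxtimes n})$ yields an independent set in $G_W^{\boxtimes(m+n)}$, so $\alpha(G_W^{\boxtimes(m+n)})\geq\alpha(G_W^{\boxtimes m})\,\alpha(G_W^{\boxtimes n})$. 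Applying Fekete's subadditive lemma to the sequence $a_n=-\log_2\alpha(G_W^{\boxtimes n})$ (which is subadditive) shows that $\frac1n\log_2\alpha(G_W^{\boxtimes n})$ converges to its supremum, so the $\liminf$, $\limsup$, and $\lim$ all coincide. Exponentiating base $2$ then gives $2^{C_0(W)}=\lim_{n\to\infty}\alpha(G_W^{\boxtimes n})^{1/n}=\Theta(G_W)$, where the last equality is just the definition of $\Theta(G)$ together with the fact that for this sequence the $\limsup$ is a limit.

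The main obstacle, such as it is, lies in the careful bookkeeping of the first step: one must verify both directions of the orthogonality-versus-non-adjacency correspondence, including the edge case of equal sequences and the precise definition of adjacency in the strong graph power $G_W^{\boxtimes n}$ (where distinct vertices $x^n,x'^n$ are adjacent iff in each coordinate they are equal or adjacent). Once that combinatorial dictionary is in place, the remainder is the standard Fekete argument. I would also remark that this is essentially Shannon's original result, so the proof is expository; the value added here is simply making the identification between $M(W,n)$ and $\alpha(G_W^{\boxtimes n})$ explicit so that the computability-theoretic machinery of later sections can be applied to either formulation interchangeably.
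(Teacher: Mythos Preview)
Your argument is correct and is the standard proof of this classical result. Note, however, that the paper does not actually supply its own proof of this theorem: it is stated with attribution to Shannon~\cite{S56}, and the only justification given in the text is the sentence immediately following the statement, which observes that the limit exists and equals the supremum $\sup_{n\in\NN}\alpha(G_W^{\boxtimes n})^{1/n}$ by Fekete's lemma. Thus your Fekete step coincides exactly with what the paper records, while your first step---the explicit identification $M(W,n)=\alpha(G_W^{\boxtimes n})$ via the orthogonality/confusability dictionary---fills in a detail the paper simply takes for granted from the cited source.
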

 
		This limit exists and equals the supremum \[
		\Theta(G_W)=\sup_{n\in\NN} \alpha(G_W^{\boxtimes n})^{\frac 1n}\] by Fekete's lemma.

\begin{Remark}
    The zero-error capacity plays an important role in several other problems from control theory as well, see \cite{Wi19}
\end{Remark}

Observe that Theorem~\ref{Shannon} yields no further information on whether $C_0(W)$ and $\Theta(G)$ are computable real numbers.

Shannon also considered the zero-error capacity for channels
with feedback in \cite{S56},
 Let $W\in\CH$, then Shannon showed in \cite{S56}
\be
C_0^{FB}=\left\{\begin{array}{ll} 0 & \text{if}\  C_0(W)=0\\
\max_P\min_y \log_2\frac 1{\sum_{x:W(y|x)>0}P(x)} & otherwise. \end{array}\right\}.
\ee
If we set
\be
\Psi(W)=\min_{p\in\P(\X)} \min_{y\in\Y}\sum_{x:W(y|x)>0} P(x),
\ee
then we have for $W$ with $C_0(W)\neq 0$,
\[
C_0^{FB}= log_2 \frac 1 {\Psi_{FB}(W)}.
\]

\subsection{Basics of control theory}

We use noatations, problem formulations and results from \cite{MS07}.
We consider the case where the plant is affected by additive exogenous disturbances $\zeta(t)$ and the sensor is noisy.
We denote by $\RR^{n\times n}$ the set of all $n\times n$ matrices $A$ with 
values in $\RR$ and by $\RR^{n\times n}_c$ the set of all computable $n\times n$
matrices over $\RR$.

\begin{Definition}
Let $A\in\RR^{n\times n}$ and $C\in\RR^{l\times n}$, $x_0\in\RR$ be the initial state and the exogenous disturbance $\zeta(t)$ be random vectors. Then we call a system with a state function $x:\NN_0\to\RR^n$ and a measured output function $y:\NN_0\to\RR^l$ with
\be
x(t+1)=Ax(t)+\zeta(t),\ x(0)=x_0,\ y(t)=Cx(t)
\ee
an discrete-time invariant linear system.
We call the system unstable, if there is an eigenvalue $\lambda$ of $A$ with
$|\lambda|\geq 1$.
\end{Definition}

\begin{Remark}
    Throughout this work, we require the pair \((A,C)\) to satisfy \emph{detectablility}. This property relates to the question whether it is possible, in general, to estimate the state \(x(n)\) based directly on the sequence \(y(n),~ y(n-1),~ y(n-2),~\ldots\) of sensor outputs. A formal definition may be found in \cite{bookMatveev}.
\end{Remark}

The set of \(n\times n\) matrices that describe an unstable system is denoted by \(\M^{n\times n}\). Likewise, \(\M_c^{n\times n}\) denotes the set of all computable \(n\times n\) matrices that describe an unstable system.
We consider the following system (see Figure~\ref{fig:GeneralSchematics}).
The system is controlled by the plant. A sensor measures the quantity to be controlled. The goal of the remote station (decoder estimation) is to estimate the current state on the basis of the prior measurements.
The sensor can communicate information to the remote station via a DMC
$(\E,\S,W)$. A Coder translates the measurements into a sequence of the finite input alphabet $e^n\in\E^n$. Depending on the received sequence $s^n\in\S^n$ the decoder estimator produce an estimate $\hat{x}$:
\be
\hx(t)=\X(t,s(0),s(1),\dots,s(t)).
\ee
We consider the model from Figure~\ref{fig:GeneralSchematics} with and without the feedback link. The coder learns $s (t)$ from the feedback at time $t + 1$. Therefore the encoding function with feedback is 
\be
e(t)=\E(t,y(0),\dots,y(t),s(0),\dots,s(t-1))\in\E
\ee
and the encoding function without feedback is 
\be
e(t)=\E(t,y(0),\dots,y(t))\in\E.
\ee

In this model we assume that we have a noiseless feedback channel (see also \cite{S01, TM04}). Such an assumption can be made in practice if the transmission power of the receiver is much greater than that of the transmitter. A typical application is the communication between a satellite and the station on earth (see \cite{SV04}).

\begin{Definition}
The coder and decoder-estimator are said to almost surely track the state with a bounded
error if
\be
\limsup_{t\to\infty} |x(t)-\hx(t)|<\infty\ \text{almost surely}
\ee

\end{Definition}

In control theory the question arises whether there is a coder and a decoder, such that
they almost surely track the state with a bounded
error.  
Let $n\in\NN$ and $W\in\CH$ be fixed. Denote $\big(\lambda_1(A),\dots,\lambda_n(A)\big)$ the
family of eigenvalues of $A\in \M^{n\times n} $ and
\be\label{eta}
\eta(A)=\sum_{j:|\lambda_j(A)|\geq 1} \log_2|\lambda_j(A)|
\ee
The mapping \(\eta : \M^{n\times n} \rightarrow \RR\) will subsequently be essential in our considerations on the decidability of the state estimation an stabilization problem.
In this context, we also require the following lemma:

\begin{Lemma}\label{L2}
The function $\eta: \M_c^{n\times n}\to \RR_c$ is a Turing computable function,
that means there exists a Turing machine $TM_M$ such that $TM_M(A) = \eta(A)$ holds for all $A$ from the above set.
\end{Lemma}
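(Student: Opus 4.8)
The plan is to show that $\eta$ restricted to $\M_c^{n\times n}$ is Borel-Turing computable by exhibiting an explicit algorithm that, given a computable representation of $A \in \M_c^{n\times n}$, produces a computable representation of $\eta(A) = \sum_{j : |\lambda_j(A)| \geq 1} \log_2 |\lambda_j(A)|$. The key observation is that $\eta(A)$ is, up to sign, the logarithm of the product of the eigenvalues of $A$ whose modulus is at least $1$, and that this product can be recovered from the coefficients of the characteristic polynomial together with knowledge of which roots lie on or outside the unit circle. Since the entries of $A$ are computable reals, the coefficients of $\chi_A(z) = \det(zI - A)$ are computable reals (they are fixed integer polynomials in the entries, and $\RR_c$ is a field closed under the basic arithmetic operations, with these operations Borel-Turing computable).

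First I would reduce to a statement about roots of a computable polynomial. A classical result in computable analysis — which I would cite rather than reprove — states that the multiset of roots of a polynomial depends continuously (indeed computably) on its coefficients, in the sense that one can compute arbitrarily good rational approximations to all roots (with multiplicity) from sufficiently good rational approximations to the coefficients. Concretely, from the computable coefficients of $\chi_A$ one obtains a computable list $\mu_1, \dots, \mu_n \in \CC$ (computable complex numbers, i.e. real and imaginary parts in $\RR_c$) comprising the eigenvalues with multiplicity. Then $|\mu_j|$ and, where $|\mu_j| > 0$, $\log_2 |\mu_j|$ are computable, since modulus and logarithm are Borel-Turing computable on their domains. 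The quantity to be computed is $\eta(A) = \sum_{j : |\mu_j| \geq 1} \log_2 |\mu_j|$.

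The main obstacle — and the only subtle point — is the discontinuity of the predicate $|\mu_j| \geq 1$: deciding whether a computable real equals, exceeds, or falls below $1$ is not possible in general, so a naive "compute each $|\mu_j|$, test against $1$, sum the qualifying terms" does not define a Borel-Turing computable function. Here I would use the structural hypothesis that $A \in \M_c^{n\times n}$ describes an \emph{unstable} system, i.e. $A$ has at least one eigenvalue of modulus $\geq 1$, together with a suitable nondegeneracy assumption that no eigenvalue lies exactly on the unit circle (this is the generic case and, I expect, is either built into the admissible class of systems or can be assumed after the detectability reduction; if eigenvalues on the circle must be allowed, one notes that such eigenvalues contribute $\log_2 1 = 0$ to $\eta$ and handles them by a limiting argument on $\eta$ itself rather than on the indicator). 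Under the assumption that $|\mu_j| \neq 1$ for all $j$, each comparison $|\mu_j| \gtrless 1$ is decidable by computing $|\mu_j|$ to increasing precision until the approximation separates from $1$, which terminates in finite time; the algorithm thereby partitions the roots and outputs the finite sum $\sum_{j : |\mu_j| > 1} \log_2 |\mu_j|$ as a computable real.

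Finally I would assemble the pieces: the map $A \mapsto (\text{coefficients of } \chi_A) \mapsto (\text{roots } \mu_1,\dots,\mu_n) \mapsto \eta(A)$ is a composition of Borel-Turing computable maps (polynomial arithmetic, computable root-finding, modulus, the comparison-and-selection step justified above, logarithm, finite summation), and by the remark in the excerpt that concatenations of Borel-Turing computable functions are Borel-Turing computable, $\eta : \M_c^{n\times n} \to \RR_c$ is Borel-Turing computable. Since Borel-Turing computability coincides with Turing's original notion, this yields the Turing machine $TM_M$ with $TM_M(A) = \eta(A)$ claimed in the statement. I would also note, for completeness, that computability of $\eta$ at a fixed $n$ suffices for the intended applications, so no uniformity in $n$ needs to be addressed here.
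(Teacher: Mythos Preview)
Your argument correctly isolates the one real obstacle --- the discontinuity of the predicate $|\mu_j|\geq 1$ --- but the way you propose to resolve it is where the gap lies. The nondegeneracy assumption (no eigenvalue on the unit circle) is \emph{not} part of the definition of $\M_c^{n\times n}$: the paper calls a system unstable as soon as some eigenvalue satisfies $|\lambda|\geq 1$, and nothing rules out eigenvalues lying exactly on the circle. Your fallback ``limiting argument on $\eta$ itself rather than on the indicator'' is the right instinct, but you stop short of turning it into an algorithm, so as written the proof does not cover the full domain $\M_c^{n\times n}$.

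The paper closes this gap with a device you were one step away from. Rather than first selecting the eigenvalues with $|\lambda_j|\geq 1$ and then summing their log-moduli, one writes
\[
\eta(A)\;=\;\sum_{j=1}^{n}\log_2\max\bigl\{1,\,|\lambda_j(A)|\bigr\},
\]
since terms with $|\lambda_j|<1$ contribute $\log_2 1=0$ and terms with $|\lambda_j|\geq 1$ contribute $\log_2|\lambda_j|$. The map $\phi(\xi)=\max\{1,\xi\}$ is continuous, hence Borel--Turing computable on $\RR_c$, so $\eta$ becomes a composition of computable maps with no comparison step whatsoever. The paper implements this via $T(A)=AA^{T}$, citing the computability of the (real) spectrum of a symmetric matrix and setting $\eta(A)=\tfrac12\sum_l\log_2\phi(\lambda_l(AA^{T}))$; note, however, that the identity $\lambda_l(AA^{T})=|\lambda_l(A)|^{2}$ invoked there holds only for normal $A$ (in general the eigenvalues of $AA^{T}$ are squared singular values, not squared eigenvalue moduli). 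Your route through the characteristic polynomial and computable root-finding already delivers the computable moduli $|\lambda_j(A)|$ directly, so combining your setup with the $\max\{1,\cdot\}$ trick yields a correct proof without that detour.
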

\begin{proof}
$T(A)=AA^T$ is a Turing-computable map. Furthermore, 
\begin{eqnarray*}
\eta(T(A)) &=& \sum_{l=1: \lambda_l(AA^t)\geq 1}^n \log_2 |\lambda_l(AA^T)|\\
&=& \sum_{l=1: |\lambda_l(AA^t)|\geq 1}^n \log_2 |\lambda_l(A)|^2\\
&=& 2\eta(A).
\end{eqnarray*}
From this follows $\eta(A)=\frac 12 \eta(T(A))$. In addition, the mapping \[
Spec(T(A))=\begin{pmatrix} \lambda_1(T(A))\\ \hdots \\ \lambda_n(T(A))\end{pmatrix}
\]
is Turing computable (see \cite{PoRi17}).
Furthermore, the mapping $\phi:\RR_c\to\RR_c$ with $\phi(\xi)=\max\{1,\xi\}$ is Turing-computable. Thus the mapping 
\[
\eta(A)= \frac 12 \sum_{j:|\lambda_j(A)|\geq 1} \log_2 |\lambda_l(T(A))| = \frac 12 \sum_{l=1}^n \log_2 |\phi(\lambda_l(T(A)))| 
\]
is Turing-computable.
\end{proof}
\begin{Remark}
Note that the mapping $\log_2:(1,\infty)\to\RR_c$ is Turing-computable and the concatenation of Turing-computable functions is Turing-computable again.
\end{Remark}

\section{The state estimation problem}\label{State}

We consider the unstable invariant linear system 
\be\label{system}
x(t+1)=Ax(t)+\zeta(t),\ x(0)=x_0,\ y(t)=Cx(t)
\ee
We now consider the model from Figure~\ref{fig:SchematicsStateEstimation}.
In this scenario, the plant is affected by a random disturbance \(\zeta(n)\). The objective is the tracking of the Plan's state \(x(n)\). No control-link is available. We consider both the case with and without a feedback-link for the DMC \(W\). We make the following assumptions:

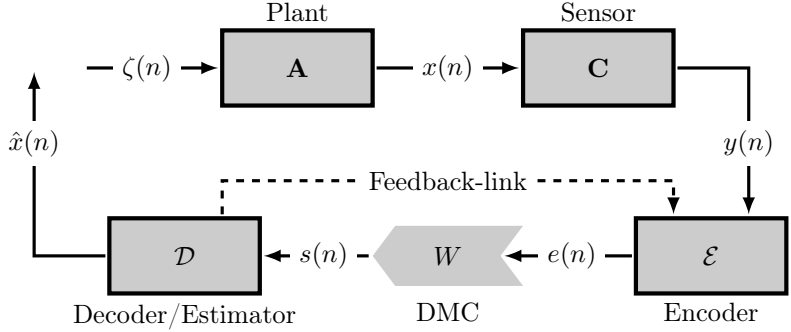
\begin{figure}[htbp!]\linespread{1}
    \centering
    \begin{tikzpicture}
            \draw[very thick, ->, shorten >=1.5pt] (-4,1) -- (-3,1); 
            \draw[very thick, ->, shorten >=1.5pt] (-4.8,1) -- (-4,1); 
            \draw (-4,1)  node[fill = white, anchor=center, align=center]
            {\(\zeta(n)\)};
            \draw[ultra thick, fill=black!20] (-3,1.5) rectangle (-1,0.5);
            \draw (-2,1)  node[anchor=center] {\(\mA\)};
            \draw (-2,1.5)  node[anchor=south] {Plant};
            \draw[very thick, ->, shorten >=1.5pt, shorten <=1.5pt] (-1,1) -- (1,1); 
            \draw (0,1)  node[fill = white, anchor=center]
            {\(x(n)\)};
            \draw[ultra thick, fill=black!20] (3,1.5) rectangle (1,0.5);
            \draw (2,1)  node[anchor=center] {\(\mC\)};
            \draw (2,1.5)  node[anchor=south] {Sensor};
            
            \draw[very thick, <-, shorten >=1.5pt, shorten <=1.5pt] (4,-1) -- (4,1) -- (3,1); 
            \draw (4,0)  node[fill = white, anchor=center, align=center]
            {\(y(n)\)};
            \draw[ultra thick, fill=black!20] (4.5,-2) rectangle (2.5,-1);
            \draw (3.5,-1.5)  node[anchor=center] {\(\E\)};
            \draw (3.5,-2)  node[anchor=north] {Encoder};
            \draw[very thick, ->, shorten >=1.5pt, shorten <=1.5pt] (2.5,-1.5) -- (0.7,-1.5); 
            \draw (1.65,-1.5)  node[fill = white, anchor=center, align=center]
            {\(e(n)\)};
            \fill[fill = black!20] (-1,-1.5) -- (-0.7, -1.1) -- (1,-1.1) -- (0.7,-1.5) -- (1,-1.9) -- (-0.7,-1.9);
            \draw (0,-1.5)  node[anchor=center] {\(W\)};
            \draw (0,-2)  node[anchor=north] {DMC};
            \draw[very thick, ->, shorten >=1.5pt, shorten <=1.5pt] (-1,-1.5) -- (-2.5,-1.5); 
            \draw (-1.65,-1.5)  node[fill = white, anchor=center, align=center]
            {\(s(n)\)};
            \draw[ultra thick, fill=black!20] (-4.5,-2) rectangle (-2.5,-1);
            \draw (-3.5,-1.5)  node[anchor=center] {\(\D\)};
            \draw (-3.5,-2)  node[anchor=north] {Decoder/Estimator};
            \draw[dashed, very thick, ->, shorten >=1.5pt, shorten <=1.5pt] (-3,-1) -- (-3,-0.5) -- (3,-0.5) -- (3,-1); 
            \draw (0,-0.5)  node[fill = white, anchor=center, align=center]
            {Feedback-link};
            \draw[very thick, ->, shorten >=1.5pt, shorten <=1.5pt] (-4.5,-1.5) -- (-5.5,-1.5) -- (-5.5,1);
            \draw (-5.5,0)  node[fill = white, anchor=center, align=center]
            {\(\hat{x}(n)\)};
    \end{tikzpicture}
    \caption{Remote state estimation.}
    \label{fig:SchematicsStateEstimation}
\end{figure}

\begin{Assumptions}\label{assumptions}
\begin{enumerate}
    \item We use a DMC $W$ as a communication channel. This means that for an input $e(t)$ the output $s(t)$ is independent of all other inputs and outputs. The transmission probability of the channel does not depend on the time.
    \item We set $s(t) = \emptyset$ if the message $e (t)$ has been lost through the channel.
    \item The system \eqref{system} does not effect the channel. This means that given an input $e(t)$ the output $s(t)$ is independent of $s_0$ and $\zeta$.
    \item $\zeta(t)$ is identical distributed according to a probability density $p(\zeta)$ mutually independent and independent of $s_0$.
    \item The pair $(A,C)$ is detectable.
\end{enumerate}
\end{Assumptions}

Under this assumptions in \cite{MS07} the following results are obtained.
\begin{Lemma}[\cite{MS07}]\label{MS1}
Let $W\in \CH$.
\begin{enumerate}
    \item If $\eta(A)>C_{0}^{FB}(W)$, then for any coder with feedback link and decoder estimator the 
estimation error is almost surely unbounded:
\[
\limsup_{t\to\infty}|x(t)-\hx(t)|=\infty\ \text{almost surely.}
\]
\item If $\eta(A)>C_{0}(W)$, then for any coder without feedback link and decoder estimator the 
estimation error is almost surely unbounded:
\[
\limsup_{t\to\infty}|x(t)-\hx(t)|=\infty\ \text{almost surely.}
\]
\end{enumerate}

\end{Lemma}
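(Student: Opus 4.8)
\emph{Proof sketch (this is the content of \cite{MS07}; here is the plan one would follow).}
The plan is to reduce bounded-error tracking of \eqref{system} to a \emph{zero-error} communication requirement over the DMC $W$ and then to read off impossibility from the definition of $C_0(W)$ (respectively, from Shannon's formula for $C_0^{FB}(W)$). First I would change coordinates so that $A$ is block-diagonal, $A = A_s \oplus A_u$, with spectral radius of $A_s$ strictly below $1$ and every eigenvalue of $A_u$ of modulus $\ge 1$; since the stable component of $x(t)-\hx(t)$ is controlled on its own, it suffices to prove that the $A_u$-component of the error is almost surely unbounded, and here $|\det A_u| = 2^{\eta(A)}$, so the unstable dynamics expand Lebesgue volume by the factor $2^{\eta(A)}$ per time step.

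Next comes the volume/counting step. Suppose, for contradiction, that some coder (with feedback, in case (1); without, in case (2)) and decoder/estimator achieve $\limsup_{t\to\infty}|x(t)-\hx(t)| \le \delta$ on a set of positive probability, for some finite $\delta$. Because $\zeta(t)$ has a density and is independent across time and of $x_0$, the set of states reachable at time $t$ along a fixed positive-probability family of disturbance realizations (say those confined to a fixed ball) has lower volume $\ge c\, 2^{\eta(A) t}$ for some $c>0$. To force $|\hx(t)-x(t)| \le \delta$ on that event, the estimator must localize the unstable component of $x(t)$ to a ball of radius $\delta$, i.e.\ it must distinguish at least $\asymp 2^{\eta(A) t}$ possibilities from the received symbols $s(0),\dots,s(t)$; hence the channel, used up to time $t$, must convey that many messages \emph{with zero probability of error}.

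The decisive point — and the reason the \emph{zero-error} capacity appears rather than the Shannon capacity — is that bounded error \emph{almost surely} cannot tolerate any fixed positive per-block error probability. If the unstable mode were resolved correctly only with probability $1-p$, $p>0$, at each of infinitely many disjoint decision epochs, then by memorylessness of $W$ and independence of $\zeta$ these failures are (essentially) independent, so by the second Borel--Cantelli lemma a failure recurs infinitely often almost surely; a single misresolution of the unstable mode is then amplified by the factor $2^{\eta(A)} > 1$ per step until corrected, correction again fails with probability bounded below, and therefore $\limsup_{t}|x(t)-\hx(t)| = \infty$ almost surely. Thus bounded-error tracking forces genuine zero-error transmission at rate $\ge \eta(A)$. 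By the definition of $M(W,n)$ and $C_0(W) = \liminf_n \tfrac1n \log_2 M(W,n)$ (and, with feedback, by Shannon's $C_0^{FB}(W) = \log_2 1/\Psi_{FB}(W)$ together with the fact that feedback-assisted zero-error codes of block length $n$ carry at most $2^{n C_0^{FB}(W) + o(n)}$ messages), no such scheme exists once $\eta(A) > C_0^{FB}(W)$ (resp.\ $> C_0(W)$) — contradiction. Finally, since $\{\limsup_t |x(t)-\hx(t)| = \infty\}$ is invariant under the shift of the i.i.d.\ disturbance sequence, a zero--one law upgrades ``positive probability'' to ``probability one''.

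The step I expect to be the main obstacle is the feedback case (1): with a feedback link the coder adapts to the channel outputs, so the counting argument must be run for variable-length, adaptive zero-error codes, and one must show that feedback buys exactly the per-symbol factor $1/\Psi_{FB}(W)$ and nothing more; this is precisely Shannon's zero-error feedback analysis, which has to be interfaced carefully with the volume-growth bound. A secondary technical nuisance is making the lower bound $c\, 2^{\eta(A) t}$ on the reachable volume rigorous when $A_u$ has nontrivial Jordan structure or eigenvalues of modulus exactly $1$, and verifying that the ``confined to a ball'' event keeps positive probability uniformly in $t$ — this is exactly where Assumption \ref{assumptions}(4) (existence of the density $p(\zeta)$) is used.
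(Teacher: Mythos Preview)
The paper does not prove this lemma at all: it is stated as an imported result from \cite{MS07} (Matveev and Savkin) and used as a black box, with no argument supplied by the present authors. There is therefore nothing in this paper to compare your sketch against.

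For what it is worth, your outline is a reasonable reconstruction of the Matveev--Savkin strategy: reduce to the unstable block $A_u$ with $|\det A_u| = 2^{\eta(A)}$, run a volume-expansion/counting argument to show that almost-sure bounded tracking forces the channel to carry $\asymp 2^{\eta(A)t}$ messages with \emph{zero} error (the Borel--Cantelli step being the reason $C_0$ and not $C$ governs), and conclude by the definitions of $C_0(W)$ and $C_0^{FB}(W)$. The two obstacles you single out --- the adaptive/feedback counting bound and the Jordan/unit-modulus case of $A_u$ --- are indeed where the real work in \cite{MS07} lies, but none of that appears in the present paper.
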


It follows from Lemma~\ref{MS1} that the inequalities $\eta(A)\leq C_{0}(W)$
and $\eta(A)\leq C_{0}^{FB}(W)$ are necessary for the existence of a coder and decoder-estimator without and with feedback that keep the estimation error bounded
with a non-zero probability.

In \cite{MS07} the next theorem concerning the sufficiency
of these conditions are given.
\begin{Lemma}[\cite{MS07}]
Let $W\in\CH$, $D\in\RR^+$ and $\zeta(t)\leq D$ for all $t$. 
\begin{enumerate}
    \item If $C_0(W)>\eta(A)$ then there exists a coder without feedback and a decoder estimator that with probability 1 track the state with a bounded error. 
    \item  If $C_{0}^{FB}(W)>\eta(A)$ then there exists a coder with feedback and a decoder estimator that with probability 1 track the state with a bounded error. 
\end{enumerate}

\end{Lemma}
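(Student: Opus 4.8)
The plan is to build an explicit quantized observer and reduce its analysis to the data-rate heuristic: the unstable modes inflate the decoder's uncertainty region by the factor $|\det A_u| = 2^{\eta(A)}$ per plant step, where $A_u$ is the restriction of $A$ to its unstable generalized eigenspace, so a zero-error channel code of rate strictly above $\eta(A)$ can keep that region bounded. First I would use detectability of $(A,C)$ to pass to effectively full state information on the unstable part: the observable subspace -- which contains the whole unstable subspace -- is recovered exactly by the encoder from $n$ consecutive sensor outputs $y(t-n+1),\dots,y(t)$, while a standard Luenberger observer tracks the unobservable (hence stable) complement with an error that stays bounded, precisely because those modes are contractions and $|\zeta(t)|\le D$. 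It therefore suffices to exhibit a coder/decoder that keeps $\limsup_t |x_u(t) - \hat{x}_u(t)| < \infty$ for a plant $x_u(t+1) = A_u x_u(t) + \zeta_u(t)$ whose eigenvalues all have modulus $\ge 1$, with $|\det A_u| = 2^{\eta(A)}$ and $|\zeta_u(t)| \le D'$ for a constant $D'$ depending on $D$ and $A$.

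Next I would set up the recursion. The decoder maintains a ``localization box'' $\widehat K(t) \ni x_u(t)$; because the transmission code is zero-error, the decoded index always equals the transmitted index, so the encoder can simulate the decoder and also knows $\widehat K(t)$ -- this is why no feedback link is needed for the bookkeeping in the first assertion, and why in the second assertion feedback enters only to make Shannon's zero-error feedback code implementable (the encoder must see past channel outputs) so that the larger rate $C_0^{FB}(W)$ becomes available. Fix $\varepsilon > 0$ with $\eta(A) + \varepsilon < C_0(W)$ (resp. $< C_0^{FB}(W)$) and a block length $T$, and fix a zero-error (feedback) code of block length $\approx T$ and $\approx 2^{T(\eta(A)+\varepsilon)}$ messages, which exists because $C_0(W) > \eta(A) + \varepsilon$ (resp. $C_0^{FB}$). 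On each window of $T$ plant steps the encoder propagates $\widehat K$ forward through $A_u$, partitions the resulting set into $\approx 2^{T(\eta(A)+\varepsilon)}$ congruent cells, and transmits the index of the cell containing the current unstable state through the code; the decoder intersects and, between windows, simply extrapolates its latest refined estimate by $A_u$, which keeps the within-window error bounded since $T$ is fixed. At each window the volume of $\widehat K$ is multiplied by $\approx 2^{T\eta(A)}$ (dynamics) and by $\approx 2^{-T(\eta(A)+\varepsilon)}$ (refinement), a net contraction by $2^{-T\varepsilon}$, while an additive $D'$-type enlargement prevents collapse, so a Lyapunov argument bounds $\mathrm{vol}(\widehat K(t))$ uniformly in $t$.

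The main obstacle, and the step I would spend the most care on, is that a uniformly bounded volume does not bound the diameter when $A_u$ is non-normal, so volume control alone does not give $\limsup_t |x_u(t) - \hat{x}_u(t)| < \infty$. I would resolve this by taking $T$ large and making the cell partition \emph{anisotropic}, adapted to the real Jordan form of $A_u$: along the directions of a Jordan block with eigenvalue $\lambda_j$ the map $A_u^{T}$ stretches by $|\lambda_j|^{T}$ up to a $\mathrm{poly}(T)$ Jordan factor, so splitting that coordinate into $\approx |\lambda_j|^{T} \cdot 2^{T\varepsilon/m}$ pieces ($m = \dim$ of the unstable part) matches the per-window stretch in that direction and absorbs the polynomial factor, since $\mathrm{poly}(T) / 2^{T\varepsilon/m} \to 0$ as $T \to \infty$. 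The total cell count per window is then $\prod_j |\lambda_j|^{T} \cdot 2^{T\varepsilon} = 2^{T(\eta(A)+\varepsilon)}$, exactly the rate afforded by the zero-error code, and now the \emph{diameter} of $\widehat K(t)$, not merely its volume, obeys a contractive-plus-bounded recursion and stays bounded. Combining this with the uniformly bounded stable-part error from the Luenberger observer yields $\limsup_t |x(t) - \hat{x}(t)| < \infty$; the qualifier ``with probability $1$'' is automatic because, under $|\zeta(t)| \le D$, the resulting bound is deterministic. The second assertion follows verbatim with $C_0^{FB}(W)$ in place of $C_0(W)$.
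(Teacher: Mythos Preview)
The paper does not prove this lemma at all: it is quoted verbatim as a result of Matveev and Savkin \cite{MS07} and used as a black box, so there is no in-paper argument to compare against. Your sketch is a faithful reconstruction of the classical quantized-observer construction that underlies the original proof in \cite{MS07} (and the closely related works of Tatikonda--Mitter and Nair--Evans): the reduction via detectability to the unstable subspace, block zero-error coding at rate just above $\eta(A)$, the encoder's ability to simulate the decoder thanks to zero-error decoding (so that feedback is only needed to realize Shannon's $C_0^{FB}$ code in part~2), and the anisotropic, Jordan-adapted partition to control diameter rather than mere volume are exactly the right ingredients. One small imprecision worth tightening: because of the disturbance $\zeta$, the encoder does not recover the observable part of the state \emph{exactly} from $n$ consecutive outputs but only up to a bounded error; this is harmless, since that bounded error can be absorbed into the initial box $\widehat K$ and into the additive $D'$-enlargement you already allow at each window.
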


In summary, through these two theorems, the following applies:
\begin{itemize}
\item Condition $\eta(A)>C_{0}(W)$ is sufficient to ensure that the state estimation problem cannot be solved for channel $W$ without feedback.
\item Condition $\eta(A)>C_{0}^{FB}(W)$ is sufficient to ensure that the state estimation problem cannot be solved with feedback for channel $W$.
\item If $C_{0}(W)>\eta(A)$ then applies, the state estimation problem without feedback for channel $W$ can be solved.
\item If $C_{0}^{FB}(W)>\eta(A)$ then applies, the state estimation problem with feedback for channel $W$ can be solved.
\end{itemize}


In the following we consider the algorithmic solvability of the state estimation problem with fixed dimension $n$ at the plant.

\subsection{Problem formulation}
Our Turing machines for the question have as inputs 
\[
\{(A, W)\}\in\M_c^{n\times n}\times \CHc. 
\]

For $n\in\NN$ we consider the following sets:
\begin{align*}
&S_{SE}(solvable) \\
&\qquad = \{(A,W)\in \M_c^{n\times n}\times \CHc: C_0(W)>\eta(A)\}\\
&S_{SE}(unsolvable) \\
&\qquad = \{(A,W)\in \M_c^{n\times n}\times \CHc: C_0(W)<\eta(A)\}\\
&S_{SE}^{FB}(solvable) \\
&\qquad = \{(A,W)\in \M_c^{n\times n}\times \CHc: C_0^{FB}(W)>\eta(A)\}\\
&S_{SE}^{FB}(unsolvable) \\
&\qquad = \{(A,W)\in \M_c^{n\times n}\times \CHc: C_0^{FB}(W)<\eta(A)\}\\
\end{align*}

We now ask: Do Turing machines $TM_1,\dots,TM_4$ with only one stop state exist for the sets 
$S_{SE}(solvable),S_{SE}(unsolvable), S_{SE}^{FB}(solvable),S_{SE}^{FB}(unsolvable)$, respectively, so that, for example, $TM_1$ stops for the input $(A,W)\in \M_c^{n\times n}\times \CHc$ exactly when $(A,W)\in S_{SE}(solvable)$ holds, that is, the Turing machine $TM_1$ is supposed to accept the inputs of the set $S_{SE}(solvable)$ exactly.

Furthermore, these Turing machines only have one holding state "stop" and should stop if and only if $(A, W)$ are in the respective interesting sets. Then the relevant set is accepted by the corresponding Turing machine.
\begin{Theorem}\label{T1}
The set 
\[
S_{SE}(solvable)\subset \M_c^{n\times n}\times\CHc 
\]
is not semi-decidable for channels without feedback.
\end{Theorem}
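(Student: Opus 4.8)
The plan is to reduce the non-semi-decidability of $S_{SE}(\text{solvable})$ to the known non-semi-decidability of the set of channels whose zero-error capacity is positive (or, more precisely, the failure of the zero-error capacity to be a semi-decidably-positive function), which is exactly the kind of statement proved in \cite{BD20Z}. First I would fix the dimension $n$ and a fixed $A_0 \in \M_c^{n\times n}$ with $\eta(A_0) = \eta_0$ a positive computable constant — e.g. a diagonal matrix with a single entry $2$, so $\eta_0 = 1$, or more flexibly choose $A_0$ so that $\eta_0$ lands just below a critical value one wants to separate. Since $\eta$ is Turing computable on $\M_c^{n\times n}$ by Lemma~\ref{L2}, plugging a fixed $A_0$ into any hypothetical semi-decision procedure for $S_{SE}(\text{solvable})$ yields a Turing machine that halts on input $W \in \CHc$ exactly when $C_0(W) > \eta_0$. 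So it suffices to exhibit a family of computable channels for which ``$C_0(W) > \eta_0$'' cannot be semi-decided.

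The key step is the construction of such a family. I would use the standard device (as in the computability arguments of Boche--Deppe) of encoding a non-recursive but recursively enumerable set $\mathcal{A} \subseteq \NN$: build a computable sequence of channels $W_k$ such that $C_0(W_k)$ takes one value (say $\geq 1 > \eta_0$, using a channel with a known large independent set in its confusability graph, e.g. a noiseless sub-channel on two symbols) when $k \in \mathcal{A}$, and takes a strictly smaller value (say $0$, using a fully confusable channel) when $k \notin \mathcal{A}$ — with the transition governed by whether the enumeration of $\mathcal{A}$ has output $k$ by a given stage, implemented via a Specker-type monotone rational sequence. One must be careful that the map $k \mapsto W_k$ is itself computable as a sequence of computable channels, and that each $W_k$ genuinely lies in $\CHc$; this is where the Specker sequence from \cite{S50} (mentioned in the excerpt) is the right tool, since it lets the "weight" controlling confusability converge computably-from-below but not effectively. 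If a Turing machine $TM_1$ semi-decided $S_{SE}(\text{solvable})$, then $TM_1(A_0, W_k)$ would halt iff $C_0(W_k) > \eta_0$ iff $k \in \mathcal{A}$, making $\mathcal{A}$ recursively enumerable in a way that — combined with $\mathcal{A}$ already being chosen co-r.e. or directly non-r.e. — produces a contradiction (decidability of a set chosen to be undecidable).

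The main obstacle I anticipate is the channel construction itself: one needs a single parametrized channel whose confusability graph changes its independence number in a controlled way as a real parameter crosses a threshold, while keeping all entries computable and the dependence on $k$ uniformly computable. The subtlety is that $C_0$ is only lower semi-continuous-ish and notoriously hard to compute (this is the whole point of \cite{BD20Z}), so I cannot simply interpolate capacities; instead the construction must force $C_0$ to be \emph{exactly} $0$ off $\mathcal{A}$ — which typically requires making the off-$\mathcal{A}$ channel's confusability graph complete (every pair of inputs confusable), and the on-$\mathcal{A}$ channel to contain a noiseless binary sub-channel. Handling the ``boundary'' inputs $k$ where the enumeration is slow (so the channel is ``almost complete'' but not quite) without accidentally giving $C_0 > 0$ is the delicate part, and is presumably exactly where the argument borrows the hard lemma from \cite{BD20Z} rather than reproving it. I would therefore structure the proof to invoke the relevant non-semi-decidability statement about $\{W : C_0(W) > 0\}$ (or the appropriate threshold version) from \cite{BD20Z} as a black box, and spend the visible work only on the reduction via $\eta$ and Lemma~\ref{L2}.
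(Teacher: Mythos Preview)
Your approach is essentially the paper's: fix a matrix $A$ with a prescribed computable value $\eta(A)=\mu$, observe that a hypothetical semi-decider for $S_{SE}(\text{solvable})$ would then semi-decide $\{W\in\CHc : C_0(W)>\mu\}$, and derive a contradiction from the result in \cite{BD20Z} that this level set is not semi-decidable for $0<\mu<\log_2(\min\{|\X|,|\Y|\})$. The paper does exactly this in a few lines; your extended sketch of how one might build the underlying channel family (r.e.\ sets, Specker sequences, confusability-graph manipulation) is unnecessary here, since---as you yourself conclude---the right move is to invoke \cite{BD20Z} as a black box for the threshold set, not for $\{W:C_0(W)>0\}$.
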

\begin{proof}
Suppose there is a Turing machine $TM_{SE}$ with the required properties. Let 
$\mu\in\RR_c^+$, $\lambda=2^\mu$ (that means $\lambda\in\RR_c$) and let 
\[
A_\lambda=\begin{pmatrix} \lambda_1 & 0 & \dots & 0 & 0 \\ 0 & \lambda_2 & 0 & \dots  & 0\\&&\hdots \\
0 & \dots  & 0 &0  & \lambda_n \end{pmatrix}\in\M^{n\times n}_c
\]
Therefore $\eta(A_\lambda) = \mu$. 
Now $(A_\lambda,W)\in S_{SE}(solvable)$ holds if and only if $\eta(A_\lambda)<C_0(W)$ holds.
Hence $\mu<C_0(W)$. We fix this $\mu$ and consider the set \[
\M(\mu)=\{W\in\CHc : C_0(W)>\mu\}.
\]
The Turing machine $TM_{SE}$ stops for input $(A_\lambda, W)$ if and only if $W\in \M(\mu)$ holds. The set $\M(\mu)$ is then semi-decidable. But with this we have reached a contradiction to our result in \cite{BD20Z}. Here we have shown that for all 
$0<\mu<\log_2(\min\{|\X|,|\Y|\})$ the set $\M(\mu)$ is not semi-decidable. With that we have proven the theorem.
\end{proof}
For the next result we need a tightening of a result of \cite{BD20R}. We prove the following:
\begin{Lemma}\label{sharp}
There is a Turing machine $TM_*$ with inputs $\lambda\in\RR_c^+$ and 
$W\in\CHc$ with the following properties: The Turing machine $TM_*$ has only one output state "stop" and otherwise computes forever. The Turing machine $TM_*$ stops for the input $(\lambda, W)$ if and only if the relationship 
$C_0^{FB}<\lambda$ holds.
\end{Lemma}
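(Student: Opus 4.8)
The plan is to establish a semi-decidability result for the predicate $C_0^{FB}(W) < \lambda$ by combining the closed-form expression for $C_0^{FB}$ recalled in the excerpt with a computable approximation of the quantity $\Psi(W)$, together with the classical characterization of when $C_0(W) = 0$. Recall that the excerpt gives
\[
C_0^{FB}(W) = \log_2 \frac{1}{\Psi(W)} \quad\text{when } C_0(W) \neq 0,
\]
and $C_0^{FB}(W) = 0$ when $C_0(W) = 0$, where $\Psi(W) = \min_{p \in \P(\X)} \min_{y \in \Y} \sum_{x: W(y|x) > 0} P(x)$. The key structural observation is that the support pattern $\{(x,y) : W(y|x) > 0\}$ is a finite combinatorial object that is decidable from $W \in \CHc$ (deciding $W(y|x) > 0$ for a computable real is the delicate point and I will address it below). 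Once the support pattern is known, $\Psi(W)$ is the optimum of a linear program with rational data, hence a computable rational number, and $C_0^{FB}(W)$ is either exactly $0$ or exactly $\log_2(1/\Psi(W))$, a computable real that can be approximated to any desired precision.

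First I would have $TM_*$, on input $(\lambda, W)$, dovetail two things: (i) run the procedure from \cite{BD20R} (or the classical graph-theoretic test) to determine whether $C_0(W) = 0$, which is a decidable property of the confusability graph $G_W$ — $C_0(W) = 0$ iff $G_W$ has no edges, equivalently iff all input letters are pairwise confusable — and note that this again only requires knowing the support pattern; (ii) in parallel, using the hypothesized knowledge of the support pattern, compute the rational number $\Psi(W)$ exactly via linear programming and hence obtain a computable-real representation of $C_0^{FB}(W)$. Given a computable-real representation of $C_0^{FB}(W)$ and the computable real $\lambda$, the strict inequality $C_0^{FB}(W) < \lambda$ is semi-decidable in the standard way: compute rational approximations $r_n$ of $C_0^{FB}(W)$ and $s_n$ of $\lambda$ with error $< 2^{-n}$, and halt as soon as $r_n + 2^{-n} < s_n - 2^{-n}$; this eventually triggers precisely when the strict inequality holds. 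In the case $C_0(W) = 0$ detected in branch (i), we instead directly test $0 < \lambda$, which is semi-decidable for $\lambda \in \RR_c^+$.

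The main obstacle is deciding, from a computable-real description of $W$, the support pattern $\{(x,y): W(y|x) > 0\}$ — in general, testing whether a computable real equals zero is undecidable. The resolution is the same device that underlies the results in \cite{BD20R} and \cite{BD20Z}: one does not decide this for arbitrary $W \in \CHc$, but rather $TM_*$ is allowed to run forever on inputs where it cannot resolve the support, and the claim is only that it halts exactly on the "yes" instances of $C_0^{FB}(W) < \lambda$. More precisely, I would dovetail over all candidate support patterns $S \subseteq \X \times \Y$: for each $S$, spend increasing effort trying to certify that $W$ has support exactly $S$ (certifying $W(y|x) > 0$ is semi-decidable by finding a rational lower bound; certifying $W(y|x) = 0$ is \emph{not} semi-decidable in general, but — crucially — for the purpose of the feedback capacity one only needs that the true support is \emph{contained} in the set of pairs one has certified to be positive for the relevant minimization, or one exploits that $\sum_y W(y|x) = 1$ forces enough of the mass to be locatable). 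Here I expect to need to reproduce the careful argument of \cite{BD20R}: approximate $\Psi(W)$ from below by LP over the certified-positive support and from above via the constraint that total probability is $1$, obtaining $\Psi_k(W) \to \Psi(W)$ computably once the support stabilizes, and then halt when the resulting lower bound on $C_0^{FB}(W)$ still lies strictly below $\lambda$. The "sharpening" over \cite{BD20R} alluded to in the lead-in is presumably exactly that \cite{BD20R} proves semi-decidability of $C_0^{FB}(W) < \lambda$ for $\lambda$ in some restricted range or with $W$ restricted, whereas here we need it for all computable $\lambda > 0$ and all $W \in \CHc$; extending the range is where the bulk of the work lies, and it reduces to showing the approximations $\Psi_k(W)$ converge effectively, which follows from the finiteness of the alphabets and the normalization $\sum_y W(y|x) = 1$.
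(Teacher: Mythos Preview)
Your approach is more complicated than the paper's and contains a genuine gap in the direction of the bounds.

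The paper's proof is very short: it invokes from \cite{BD20R} a black-box Turing machine that, on input $(N,W)$, outputs a computable number $F_N(W)$ forming a monotone sequence converging to $C_0^{FB}(W)$ from the appropriate side. One then dovetails, for $N=1,2,\ldots$, the semi-decidable test ``is $F_N(W)-\lambda<0$?'' (using the standard semi-decider for strict inequality of computable reals). The machine halts iff some $F_N(W)<\lambda$, which by monotonicity is equivalent to $C_0^{FB}(W)<\lambda$. No support-pattern analysis, no LP, no case split on $C_0(W)=0$ is needed at this level; all of that is absorbed into the cited result.

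Your plan instead tries to rebuild the approximation from scratch, and in doing so you get the inequality direction wrong. With an underestimate $S_k$ of the true support (obtained by certifying positivity), the sum $\sum_{x:(x,y)\in S_k}P(x)$ is a \emph{lower} bound on $\sum_{x:W(y|x)>0}P(x)$ for every $P,y$; hence the LP value $\Psi_{S_k}$ is a lower bound on $\Psi(W)$, and $\log_2(1/\Psi_{S_k})$ is an \emph{upper} bound on $C_0^{FB}(W)$, not a lower bound. Your sentence ``halt when the resulting lower bound on $C_0^{FB}(W)$ still lies strictly below $\lambda$'' is therefore wrong as stated: a lower bound below $\lambda$ certifies nothing. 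What you actually obtain is an upper bound, and halting when that upper bound drops below $\lambda$ is the correct rule. Relatedly, your parenthetical ``one only needs that the true support is \emph{contained} in the set of pairs one has certified to be positive'' has the containment reversed: certifying positivity yields a subset of the true support, not a superset. Once you fix these directions the argument can be repaired, but as written the halting criterion is unsound.

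A second, smaller issue: you call ``$C_0(W)=0$'' a decidable property of $G_W$, which is true, but then you need $G_W$, which again requires the support. Your dovetailing over all candidate supports does not resolve this cleanly because, as you yourself note, certifying $W(y|x)=0$ is not semi-decidable; you cannot ever confirm that a candidate support is \emph{exactly} right. The fix is not to dovetail over candidate supports at all, but simply to observe that ``all pairs are confusable'' is a $\Sigma_1$ statement (it asserts the existence of common positive outputs) and is therefore semi-decidable directly; you then dovetail this single test against the upper-bound test from the previous paragraph. But at that point you have essentially reconstructed the content of the \cite{BD20R} black box, which the paper just cites.
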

\begin{proof}
In \cite{BD20R} a Turing machine $TM^{FB}$ was constructed that receives the inputs $N\in\NN$ and $W\in\CHc$ and computes a function 
$F_N(W) = TM^{FB}(W,N)$ with $F_N(W)<F_{N+1}(W)$ and 
$\lim_{N\to\infty} F_N(W)=C_0^{FB}(W)$. Further, we use a Turing machine 
$TM_<$ that receives the inputs $\xi,\lambda\in\RR_c$. 
$TM_<$ stops for the input $(\xi, \lambda)$ if and only if $\xi<\lambda$ applies. We can trace this back to a Turing machine $TM_{<0}$ which decides for input $\xi\in\RR_c$ whether $xi <0$ and stops if and only if $xi <0$ applies.
We now construct $TM_*$. This Turing machine receives the inputs $\lambda\in\RR_c^+$ and $W\in\CHc$. We construct $TM_*$ as follows. For 
$N = 1$ we calculate $\Psi_1 (W)$ and start the Turing machine 
$TM_{<0}$ for the input $\Psi_1(W)-\lambda$ for this computable number we have a program that computes this number. $TM_{<0}$ receives this number as an input. A computation step is carried out for $TM_{<0}$ when this Turing machine stops. This means $\Psi_1(W)-\lambda <0$ and thus 
$C_0^{FB}-\lambda <0$.
If $TM_{<0}$ has not stopped for input $\Psi(W)-\lambda$, then we compute $\Psi_2(W)-\lambda$ in the next loop and carry out the first computation step for $\Psi_2-\lambda$. If one of the two loops then stops, then, as already shown, $C_0^{FB}(W)-\lambda <0$, etc.
It is clear that the Turing machine $TM_*$ stops if and only if there is an $N_0\in\NN$ such that $\Psi_{N_0}<0$. But this is true if and only if 
$C_0^{FB} <0$. So we have a Turing machine $TM_*$ with the desired properties. The central statement is that $TM_*$ is recursively dependent on $W$ and $\lambda$.
Now we show that the Turing machine $TM_*$ already has the desired properties. Let $(A, W)\in\M^{n\times n}_c\times\CHc$. 
Then $\eta(A)=TM_\eta(A)\in\RR_c$ and $TM_\eta$ computes from the program for describing $A$ a program for computing the number $\eta(A)$. With this we compute $\eta(A)$ and use the Turing machine $TM_*$ for $(W,\eta(A))$. This stops exactly when $C_0^{FB}(W)-\eta(A)<0$. We have thus proven the desired property.
\end{proof}

We continue to show.
\begin{Theorem}\label{T2}
The set 
\[
S_{SE}^{FB}(solvable)\subset \M_c^{n\times n}\times\CHc 
\]
is not semi-decidable for channels with feedback.
\end{Theorem}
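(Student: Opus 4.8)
The plan is to mirror the proof of Theorem~\ref{T1}, with the zero-error capacity $C_0$ replaced by the zero-error feedback capacity $C_0^{FB}$. Suppose, towards a contradiction, that there is a Turing machine $TM_{SE}^{FB}$ with a single halting state that accepts exactly the pairs in $S_{SE}^{FB}(solvable)$. Fix a computable number $\mu$ with $0<\mu<1$ (say $\mu=\tfrac12$) and put $A=\diag(2^{\mu},\tfrac12,\dots,\tfrac12)\in\M_c^{n\times n}$, so that $\eta(A)=\mu$ by \eqref{eta}; this $A$ describes an unstable system and admits a detectable pair $(A,C)$, e.g.\ with $C=\Id$. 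Then $(A,W)\in S_{SE}^{FB}(solvable)$ holds if and only if $C_0^{FB}(W)>\mu$, so composing $TM_{SE}^{FB}$ with the Turing-computable map $W\mapsto(A,W)$ yields a Turing machine witnessing that
\[
\M^{FB}(\mu)=\{W\in\CHc : C_0^{FB}(W)>\mu\}
\]
is semi-decidable. It therefore remains to refute the semi-decidability of $\M^{FB}(\mu)$; this is the feedback counterpart of the statement about $\M(\mu)$ invoked in Theorem~\ref{T1}, and if it is already available from \cite{BD20R} (in the spirit of how \cite{BD20Z} is used there) the proof is complete. Otherwise I would argue directly as follows.

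For a computable real $a\in[0,1]$, define a channel $W_a\in\CHc$ on $\X=\Y=\{1,2\}$ by $W_a(\cdot|1)=(1,0)$ and $W_a(\cdot|2)=(a,1-a)$; the assignment $a\mapsto W_a$ is a computable injection, since a program for $a$ produces a program for every entry of $W_a$. If $a=0$ the channel is noiseless, so $C_0^{FB}(W_0)=C_0(W_0)=1>\mu$. If $a>0$, both inputs can produce the output $1$, so the confusability graph $G_{W_a}$ is the complete graph on two vertices; hence $\alpha(G_{W_a}^{\boxtimes n})=1$ for every $n$, i.e.\ $C_0(W_a)=0$, and Shannon's formula for the zero-error feedback capacity then gives $C_0^{FB}(W_a)=0<\mu$. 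Therefore $W_a\in\M^{FB}(\mu)$ if and only if $a=0$. Now, for an arbitrary Turing machine $M$, let $a_M=2^{-k}$ if $M$ halts on the empty input after exactly $k$ steps and $a_M=0$ if $M$ never halts; a program for $a_M$ is obtained uniformly from a description of $M$. Running a hypothetical semi-decision procedure for $\M^{FB}(\mu)$ on $W_{a_M}$ would then halt exactly when $M$ does not halt, making the set of non-halting machines recursively enumerable — which is impossible. Hence $\M^{FB}(\mu)$, and with it $S_{SE}^{FB}(solvable)$, is not semi-decidable.

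The only steps that require care are the evaluation $C_0^{FB}(W_a)=0$ for $a>0$ — which follows at once from $C_0(W_a)=0$, since Shannon's formula returns $0$ for $C_0^{FB}$ whenever $C_0$ vanishes — and the verification that $A\in\M_c^{n\times n}$ carries a detectable pair $(A,C)$; both are routine. I expect the substantive difficulty to lie not in Theorem~\ref{T2} itself but in the underlying non-semi-decidability of $\M^{FB}(\mu)$ for $0<\mu<\log_2\min\{|\X|,|\Y|\}$; if the paper quotes this directly from \cite{BD20R}, the argument collapses to the first paragraph above, whereas the self-contained reduction via $W_a$ and the halting problem establishes it for the threshold $\mu=\tfrac12$ that is all we actually need here.
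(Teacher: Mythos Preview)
Your first paragraph is essentially the paper's proof: mirror Theorem~\ref{T1}, reduce a hypothetical semi-decision procedure for $S_{SE}^{FB}(solvable)$ to one for $\M^{FB}(\mu)=\{W\in\CHc:C_0^{FB}(W)>\mu\}$, and invoke the non-semi-decidability of the latter. The paper does quote this last fact directly from \cite{BD20R} (for every $0<\mu<\log_2\min\{|\X|,|\Y|\}$), so, exactly as you anticipated, the argument collapses to your first paragraph.

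Your self-contained reduction via the family $W_a$ and the halting problem is a correct alternative that the paper does not give; it is more elementary in that it bypasses the machinery of \cite{BD20R} and exposes the undecidability source directly. Two minor remarks. First, your construction lives on $\X=\Y=\{1,2\}$, whereas the theorem is stated for arbitrary fixed finite alphabets; for $|\X|,|\Y|\geq 2$ you can pad $W_a$ by sending every additional input letter deterministically to output~$1$, which keeps the confusability graph complete (hence $C_0=C_0^{FB}=0$) when $a>0$ and still gives $C_0^{FB}(W_0)\geq C_0(W_0)=1>\mu$ when $a=0$. Second, the paper's diagonal matrix in the proof of Theorem~\ref{T1} is presented slightly differently from your $\diag(2^{\mu},\tfrac12,\dots,\tfrac12)$, but both are computable, unstable, and satisfy $\eta=\mu$, so either choice works.
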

\begin{proof}
We use the proof idea from the proof of Theorem~\ref{T1} and show: If the above set $S_{SE}^{FB}$ is semi-decidable, then for all $\mu$ with $0 <\mu<\log(\min\{|\X|,|\Y|\})$ the set 
\be
\{W\in\CHc : \mu<C_0^{FB}(W)\}
\ee
would be semi-decidable.
We showed in \cite{BD20R} that for all $0 <\mu<\log_2(\min\{|\X|,|\Y|\})$ this set is not semi-decidable. With this we have a contradiction and proven the theorem.
\end{proof}


We now want to examine the unsolvability of the state estimation problem. We first consider the case of channels with feedback.

\begin{Theorem}\label{T3}
The set 
\[
S_{SE}^{FB}(non-solvable)\subset \M^{n\times n}_c \times \CHc 
\]
is semi-decidable for channels with feedback.

\end{Theorem}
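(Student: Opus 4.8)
The plan is to obtain the required Turing machine as a direct composition of the machines supplied by Lemma~\ref{L2} and Lemma~\ref{sharp}; in fact the construction is essentially the one already sketched in the closing paragraph of the proof of Lemma~\ref{sharp}. By definition, $S_{SE}^{FB}(\textnormal{non-solvable})$ is precisely the set of pairs $(A,W) \in \M_c^{n\times n} \times \CHc$ satisfying $C_0^{FB}(W) < \eta(A)$, so it suffices to exhibit a Turing machine with a single halting state that, on input $(A,W)$, halts if and only if this strict inequality holds.

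By Lemma~\ref{L2} the map $\eta\colon \M_c^{n\times n} \to \RR_c$ is Turing computable, so from a program describing $A$ we can effectively produce a program representing the computable real number $\eta(A)$. By Lemma~\ref{sharp} there is a Turing machine $TM_*$ that, on input $(\lambda, W)$ with $\lambda \in \RR_c^+$ and $W \in \CHc$, halts exactly when $C_0^{FB}(W) < \lambda$. The desired machine $TM_3$ would then operate as follows: given $(A,W)$, it first runs $TM_\eta$ on $A$ to obtain a representation of $\eta(A)$, and then runs $TM_*$ on the pair $(\eta(A), W)$, halting precisely when $TM_*$ halts. Since a composition of Turing-computable operations is again Turing computable, $TM_3$ is well defined, has a single halting state, and halts if and only if $C_0^{FB}(W) < \eta(A)$, i.e. if and only if $(A,W) \in S_{SE}^{FB}(\textnormal{non-solvable})$, which is exactly semi-decidability.

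The only point needing a word of care is the boundary case in which $A$ is unstable but all of its eigenvalues of modulus at least $1$ lie on the unit circle, so that $\eta(A) = 0$; here one either admits $\lambda = 0$ as an input to $TM_*$ or handles the case separately, observing that since $C_0^{FB}(W) \geq 0$ always holds, the inequality $C_0^{FB}(W) < 0$ is never satisfied and $TM_3$ correctly computes forever, consistently with $(A,W) \notin S_{SE}^{FB}(\textnormal{non-solvable})$. Apart from this routine caveat there is no real obstacle: all the genuine difficulty — producing, from a channel given only through its computable transition probabilities (whose support structure is itself undecidable), a finite certificate for the strict inequality $C_0^{FB}(W) < \eta(A)$ — has already been absorbed into Lemma~\ref{sharp}, which rests on the monotone computable approximations of $C_0^{FB}$ from \cite{BD20R} together with the semi-decidability of the predicate $\xi < 0$ for computable reals $\xi$.
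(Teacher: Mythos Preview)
Your proof is correct and follows essentially the same approach as the paper: both compute $\eta(A)$ via Lemma~\ref{L2} and then invoke the machine $TM_*$ from Lemma~\ref{sharp} on the pair $(\eta(A),W)$, which is exactly the composition the paper sketches (indeed, as you note, already at the end of the proof of Lemma~\ref{sharp}). Your explicit handling of the boundary case $\eta(A)=0$ is a welcome clarification that the paper omits, but the underlying strategy is identical.
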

\begin{proof}
Let $A\in\M^{n\times n}_c $ and $W\in\CHc$ then 
\[
(A,W)\in S_{Se}^{FB}(non-solvable)\Leftrightarrow \eta(A)>C_0^{FB}(W).
\]
The mapping $\eta:\M^{n\times n}_c \to \RR_c$ is Turing-computable. 
Thus $(A,W)\in S^{FB}_{SE}(non-solvable)$ is fulfilled if and only if 
$W\in\{ W\in\CHc : C_0^{FB}(W)<\eta(A)\}$ is fulfilled. 
We now construct a Turing machine $TM_*$ which receives $\mu\in\RR_c^+$ and 
$W\in\CHc$ as input. $TM_*$ only has the initial state "stop" and otherwise computes forever and stops if and only if $C_0^{FB}(W)<\mu$ applies to the input $(\mu,W)$.
To do this, we sharpen the semi-decidability problem (see \cite{BD20R})
in Lemma~\ref{sharp}
for the set 
\be
\{W\in\CHc : 0<C_0^{FB}(W)<\lambda\} 
\ee
with $\lambda \in \RR_c^+$, where the program that solves the Turing machine recursively depends on $\lambda$. The Turing machine $TM_*$ 
from Lemma~\ref{sharp} delivers this result immediately.
\end{proof}
\begin{Remark}
We do not know whether a result similar to Theorem~\ref{T3} also applies to channels without feedback. 
We know that for all $\lambda\in\RR_c^+$ the set 
\be\label{set1}
\{W\in\CHc : C_0^{FB}(W)<\lambda\} 
\ee
is semi-decidable, but for the set 
\be\label{set2}
\{W\in\CHc : C_0(W)<\lambda\} 
\ee
this is still open for $\lambda\in\RR_c^+$. 
This question is coupled with other interesting questions from information theory and combinatorics. The question of the semi-decidability of the set
\eqref{set1} for $\lambda\in\RR_c^+$ is equivalent to the question of the semi-decidability of the set 
\be\label{set3}
\{G:\G:\Theta(G)>2^\lambda\},
\ee
for the zero-error capacity of graphs.

If \eqref{set3} were semi-decidable for all $\lambda\in\RR_c^+$, then the Turing computability could be shown by the Shannon zero-error capacity of graphs (see also the discussion in \cite{BD20Z}).

\end{Remark}

\subsection{Fixed unstable System or fixed Channel}

We have so far considered the scenario that a Turing machine $(A,W)$ receives as input. For some practical applications it is also interesting that $A$ is fixed and only the DMC varies. This means for example that the unstable system $A$ is fixed and the channel varies. This occurs practically as for example always in wireless communication and also in optical communication by changing the physical parameters of the optical fiber due to temperature variations. The question is now whether for the fixed unstable system $A$ a Turing machine exists, which stops for the input channel $W\in\CHc$ exactly then, if according to Figure~\ref{fig:GeneralSchematics} over this channel the state $\hx$ can be computed according to the strong requirement. 
First we consider the model without the feedback link.
This corresponds to the question whether the set 
\[
S_{>\eta}(A):=\{W\in\CHc : C_0(W)>\eta(A)\}
\] is semi-decidable.
An answer is given by the following theorem.
\begin{Theorem}
Let $\X,\Y$ be fixed alphabets and $A\in \M_c^{n\times n}$ with $0<\eta(A)<\log_2(\min\{|\X|,|\Y|\})$. Then the set $S_{>\eta}(A)$ is not semi-decidable. If $\eta(A)=\log_2(\min\{|\X|,|\Y|\}$, then the set $S_{>\eta}(A)$  is empty.
\end{Theorem}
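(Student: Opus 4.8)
The plan is to obtain both assertions as short corollaries: the failure of semi-decidability from the result of \cite{BD20Z} already invoked in the proof of Theorem~\ref{T1}, and the emptiness statement from an elementary upper bound on the zero-error capacity.

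For the first part, I would note that by Lemma~\ref{L2} the value $\eta(A)$ is a computable real number, and by hypothesis $0<\eta(A)<\log_2(\min\{|\X|,|\Y|\})$. Writing $\mu=\eta(A)$, the set $S_{>\eta}(A)$ coincides with the set $\M(\mu)=\{W\in\CHc : C_0(W)>\mu\}$ appearing in the proof of Theorem~\ref{T1}. Since it is shown in \cite{BD20Z} that $\M(\mu)$ is not semi-decidable for every $\mu$ with $0<\mu<\log_2(\min\{|\X|,|\Y|\})$, the set $S_{>\eta}(A)$ is not semi-decidable. The only point requiring attention is that the cited statement be applicable to the specific computable value $\mu=\eta(A)$; this is immediate, because that statement is uniform over the whole open interval, into which $\eta(A)$ falls by assumption.

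For the second part, I would show that $C_0(W)\leq\log_2(\min\{|\X|,|\Y|\})$ for every $W\in\CH$, which forces $S_{>\eta}(A)=\emptyset$ whenever $\eta(A)=\log_2(\min\{|\X|,|\Y|\})$. Indeed, for every $n$ one has $M(W,n)\leq|\X|^n$, since there are only $|\X|^n$ input words $x^n\in\X^n$; and $M(W,n)\leq|\Y|^n$, since the vectors $W^n(\cdot|x^n)$ are nonzero probability distributions and a family of mutually orthogonal nonzero vectors in $\RR^{\Y^n}$ has at most $\dim\RR^{\Y^n}=|\Y|^n$ elements. Hence $\frac{1}{n}\log_2 M(W,n)\leq\log_2(\min\{|\X|,|\Y|\})$ for all $n$, and passing to the $\liminf$ gives $C_0(W)\leq\log_2(\min\{|\X|,|\Y|\})$. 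Consequently no $W\in\CHc$ can satisfy $C_0(W)>\eta(A)=\log_2(\min\{|\X|,|\Y|\})$, so the set is empty.

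I do not anticipate a real obstacle here: both halves reduce to facts that are already on the table — the quoted non-semi-decidability result of \cite{BD20Z} and the trivial counting and dimension bounds on $M(W,n)$. The only mild care needed is the bookkeeping that ties $\mu=\eta(A)$ to the required open interval, which is exactly the hypothesis of the theorem; and, if one wants the $\eta(A)=\log_2(\min\{|\X|,|\Y|\})$ case to be non-vacuous, noting that this value is indeed attained by some $A\in\M_c^{n\times n}$ (for instance a diagonal matrix whose eigenvalue logarithms sum to $\log_2(\min\{|\X|,|\Y|\})$).
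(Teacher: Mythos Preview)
Your proposal is correct and follows essentially the same approach as the paper. The paper's proof is extremely terse: for the first part it simply says the non-semi-decidability ``follows directly from the previous results'' (meaning the reduction to $\M(\mu)$ from the proof of Theorem~\ref{T1} and the result of \cite{BD20Z}), and for the second part it just asserts $\max_{W\in\CHc} C_0(W)=\log_2(\min\{|\X|,|\Y|\})$ without justification. You unpack both steps explicitly --- identifying $S_{>\eta}(A)=\M(\eta(A))$ and deriving the bound $M(W,n)\leq\min\{|\X|^n,|\Y|^n\}$ --- but the underlying argument is the same.
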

\begin{proof}
Under the condition $\eta$, non-semi-decidability follows directly from the previous results. Furthermore, 
\[
\max_{W\in\CHc} C_0(W) =\log_2 (\min\{|\X|,|\Y|\})
\]
holds. Therefore for $\eta(A)=\log_2 (\min\{|\X|,|\Y|\})$ the set $S_{>\eta}(A)$ is empty.
\end{proof}

Now we consider Figure~\ref{fig:GeneralSchematics} with feedback link.
This corresponds to the question whether the set 
\[
S^{FB}_{>\eta}(A):=\{W\in\CHc : C^{FB}_0(W)>\eta(A)\}
\] is semi-decidable.
Using the same method, we obtain the following result for state estimation with feedback and fixed unstable system $A$.
\begin{Theorem}
Let $\X,\Y$ be fixed alphabets and $A\in \M_c^{n\times n}$ with $0<\eta(A)<\log_2(\min\{|\X|,|\Y|\})$. Then the set $S^{FB}_{>\eta}(A)$ is not semi-decidable. If $\eta(A)=\log_2(\min\{|\X|,|\Y|\})$, then the set $S^{FB}_{>\eta}(A)$  is empty.
\end{Theorem}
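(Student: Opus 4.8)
The plan is to mirror the proof strategy used for the without-feedback analogue (the theorem immediately preceding, concerning $S_{>\eta}(A)$) and adapt it to the feedback quantity $C_0^{FB}$. First I would address the emptiness claim. We know from Shannon's feedback formula that $C_0^{FB}(W) = \log_2\tfrac{1}{\Psi(W)}$ whenever $C_0(W) \neq 0$, and $0$ otherwise. Since $\Psi(W) = \min_{P}\min_y \sum_{x : W(y|x)>0} P(x) \geq 1/|\X|$ always, we get $C_0^{FB}(W) \leq \log_2 |\X|$; and in fact, using a noiseless channel on $\min\{|\X|,|\Y|\}$ symbols, $C_0^{FB}$ attains $\log_2(\min\{|\X|,|\Y|\})$, while it can never exceed it (the feedback capacity is bounded by $\log_2\min\{|\X|,|\Y|\}$ by the standard cardinality argument on distinguishable messages). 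Hence $\max_{W\in\CHc} C_0^{FB}(W) = \log_2(\min\{|\X|,|\Y|\})$, which immediately gives that $S^{FB}_{>\eta}(A) = \{W : C_0^{FB}(W) > \eta(A)\}$ is empty when $\eta(A) = \log_2(\min\{|\X|,|\Y|\})$.

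For the non-semi-decidability part under the hypothesis $0 < \eta(A) < \log_2(\min\{|\X|,|\Y|\})$, I would argue by contradiction exactly as in the proof of Theorem~\ref{T2}. Set $\mu := \eta(A)$, which lies strictly between $0$ and $\log_2(\min\{|\X|,|\Y|\})$, and note $\mu \in \RR_c$ since $\eta$ is Turing-computable on $\M_c^{n\times n}$ by Lemma~\ref{L2}. Observe that $S^{FB}_{>\eta}(A)$ is exactly the set $\{W \in \CHc : \mu < C_0^{FB}(W)\}$. A Turing machine semi-deciding $S^{FB}_{>\eta}(A)$ would therefore semi-decide this latter set. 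But our result from \cite{BD20R} (invoked also in the proof of Theorem~\ref{T2}) states that for every $\mu$ with $0 < \mu < \log_2(\min\{|\X|,|\Y|\})$ the set $\{W \in \CHc : \mu < C_0^{FB}(W)\}$ is \emph{not} semi-decidable, yielding the contradiction.

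I expect the main obstacle to be largely bookkeeping rather than a genuine mathematical hurdle: one must make sure that the constant $\mu = \eta(A)$ really does fall in the open interval for which the \cite{BD20R} non-semi-decidability result applies — this is guaranteed precisely by the hypothesis $0 < \eta(A) < \log_2(\min\{|\X|,|\Y|\})$ — and that $\mu$ is genuinely a computable real so that feeding it to a hypothetical semi-decider is legitimate, which follows from Lemma~\ref{L2}. A secondary point worth stating carefully is the computation of $\max_{W \in \CHc} C_0^{FB}(W)$; one should confirm both the upper bound $C_0^{FB}(W) \leq \log_2(\min\{|\X|,|\Y|\})$ and its attainment by an explicit computable channel (e.g.\ the identity channel on $\min\{|\X|,|\Y|\}$ letters), so that the emptiness statement is airtight. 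No new ideas beyond those in Theorems~\ref{T1}--\ref{T2} are needed.
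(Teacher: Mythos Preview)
Your proposal is correct and follows essentially the same approach as the paper: the paper does not spell out a separate proof for this feedback theorem but simply states that it is obtained ``using the same method'' as the immediately preceding without-feedback result, whose proof in turn reduces the non-semi-decidability to the known non-semi-decidability of $\{W:\mu<C_0^{FB}(W)\}$ from \cite{BD20R} and handles emptiness via $\max_W C_0^{FB}(W)=\log_2(\min\{|\X|,|\Y|\})$. Your write-up is in fact more explicit than the paper's on both points (in particular on justifying the maximum of $C_0^{FB}$), so nothing is missing.
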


So you get the same answer for the system with and without feedback link. 

The situation is different if we consider the sets 
\[ S_{<\eta}(A):=\{W\in\CHc : C_0(W)<\eta(A)\}
\] and 
\[ S^{FB}_{<\eta}(A):=\{W\in\CHc : C^{FB}_0(W)<\eta(A)\}.
\] 
Without feedback link the question is still open! In contrast, it follows immediately from the previous results that $S^{FB}_{<\eta}(A)$ is semi-decidable.

{\bf Interesting observation:} conversely, if the channel $W\in\CHc$ is fixed and we allow the system $A$ to vary (due to fluctuations on the system), then the corresponding answers to the corresponding questions are positive (as before) when $C_0(W)\in\RR_c$. There we just have to use the Turing machine corresponding to \cite{PR17}. $\zeta=C_0(W)$ is fixed and we have a Turing machine TM which stops exactly when $\eta(A)>\zeta$. We get exactly the same for $\eta(A)<\zeta$ when $C_0(W)>0$. The same is true for fixed $W$ and $C_o^{FB}(W)\in\RR_c$. Here $C_0^{FB}(W)\in\RR_c$ is always satisfied for $W\in\CHc$.

{\bf Discussion:} We do not know whether $C_0(W)\in\RR_c$ always holds for $W\in\CHc$. If $C_0(W_*)\not\in\RR_c$ holds for a given $W_*\in\CHc$, then the above statement fails. 

\begin{Theorem}
If there exists a channel $W_*\in\CHc$ with
$C_0(W_*)\not\in\RR_c$, then the set
\[
S_0:=\{A\in \M_c^{n\times n} :\eta(A)>C_0(W_*)\}
\]
is not semi-decidable.
\end{Theorem}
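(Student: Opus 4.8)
The plan is to reduce the semi-decidability of $S_0$ to that of a set of channels whose non-semi-decidability we have effectively already established, exploiting the freedom in the matrix $A$ to "scan" the value $C_0(W_*)$ from above. First I would observe that, by the diagonal construction already used in the proof of Theorem~\ref{T1}, for every $\mu \in \RR_c^+$ there is a computable matrix $A_\mu \in \M_c^{n\times n}$ with $\eta(A_\mu) = \mu$, and the map $\mu \mapsto A_\mu$ is itself Turing computable (it merely places $2^{\mu}$ in one diagonal entry and $1$'s elsewhere, or splits the exponent across several entries). Hence $A_\mu \in S_0$ if and only if $\mu > C_0(W_*)$. If a Turing machine $TM_{S_0}$ semi-decided $S_0$, then composing it with $\mu \mapsto A_\mu$ would yield a Turing machine that halts on input $\mu \in \RR_c^+$ precisely when $\mu > C_0(W_*)$, i.e.\ it would semi-decide the set $\{\mu \in \RR_c^+ : \mu > C_0(W_*)\}$.

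Next I would argue that semi-deciding $\{\mu \in \RR_c^+ : \mu > C_0(W_*)\}$ forces $C_0(W_*)$ to be a computable real, contradicting the hypothesis. The standard fact here is that a real $\xi$ lies in $\RR_c$ iff \emph{both} the set $\{\mu \in \QQ : \mu > \xi\}$ and the set $\{\mu \in \QQ : \mu < \xi\}$ are recursively enumerable (equivalently, the "right cut" and "left cut" are r.e.); a one-sided enumeration alone does not suffice, which is exactly the content of Specker's example recalled in Section~\ref{Basic}. However, in our situation we already possess, from \cite{BD20R} (and as sharpened around \eqref{set1} for the feedback case), a Turing machine producing a computable monotone sequence $F_N(W_*) \nearrow C_0(W_*)$ when we work with $C_0^{FB}$; for $C_0$ itself we instead use the lower approximations coming from the quantities $\alpha(G_{W_*}^{\boxtimes n})^{1/n}$, which are computable and converge (non-effectively) up to $2^{C_0(W_*)}$ by Theorem~\ref{Shannon} and Fekete's lemma. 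Thus the left cut $\{\mu \in \QQ : \mu < C_0(W_*)\}$ is always recursively enumerable. Combining this with the hypothetical semi-decision procedure for the right cut $\{\mu > C_0(W_*)\}$ — which only needs to be run on rational $\mu$, a sub-case of $\RR_c^+$ inputs — we would get that $C_0(W_*)$ admits simultaneously enumerable lower and upper rational approximations, hence $C_0(W_*) \in \RR_c$, the desired contradiction.

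The main obstacle, and the step requiring the most care, is the passage from "both cuts r.e." to "$C_0(W_*) \in \RR_c$" made \emph{effective}: one must check that interleaving the two enumerations actually produces a computable sequence of rationals converging to $C_0(W_*)$ at a computable rate (e.g.\ whenever a lower bound $a$ and an upper bound $b$ with $b - a < 2^{-M}$ have both appeared, output their midpoint). This is routine but relies on the right cut enumeration halting for \emph{every} rational $\mu > C_0(W_*)$, which is precisely what the hypothetical $TM_{S_0}$ (after composition with $\mu \mapsto A_\mu$) guarantees, and on the left cut being genuinely r.e., which is the one place where one must invoke the computability of the graph powers $\alpha(G_{W_*}^{\boxtimes n})$ rather than of $C_0$ itself. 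A secondary technical point is ensuring the composition $\mu \mapsto A_\mu \mapsto TM_{S_0}(A_\mu)$ respects the input conventions of Definition~\ref{semi} (a representation of the computable real $\mu$ must be transformed into a representation of the computable matrix $A_\mu$), but this is exactly the kind of concatenation-of-Borel--Turing-computable-maps argument already used in Lemma~\ref{L2} and its surrounding remarks. Once these points are in place, the theorem follows.
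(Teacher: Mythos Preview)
Your proposal is correct and follows essentially the same strategy as the paper: both arguments exploit that $C_0(W_*)$ is already approximable from below by a computable increasing sequence (via $\alpha(G_{W_*}^{\boxtimes n})^{1/n}$ and Fekete), and show that semi-decidability of $S_0$ would supply the matching approximation from above, forcing $C_0(W_*)\in\RR_c$. The paper's version is terser---it simply asserts that semi-decidability of $S_0$ yields a computable monotonically decreasing sequence converging to $C_0(W_*)$ (deferring the mechanics to \cite{BM21}) and then invokes the contradiction---whereas you spell out the reduction through the diagonal matrices $A_\mu$ and the left/right-cut enumeration explicitly, which is exactly the content behind that citation.
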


\begin{proof}
We know that by assumption $C_0(W_*)\not\in\RR$ holds. $C_0(W_*)$ is a limit of a monotonically increasing computable sequence of computable numbers. Now if the set $S_0$ is semi-decidable after all, then we can find (see \cite{BM21}) a computable sequence of monotonically decreasing computable numbers converging to $C_0(W_*)$. This is a contradiction, to the assumption $C_0(W_*)\not\in\RR$.
\end{proof}

\section{The stabilization problem}\label{Stab}

We now consider the following stabilization problem (see also \cite{MS07} section~6). Here we assume again that the plan is unstable and that the assumptions~\ref{assumptions} apply. The plant is now driven by controls $u(t)\in\RR^m$:
\begin{eqnarray}\nonumber
x(t+1)&=&Ax(t)+Bu(t)+\zeta(t);\\
x(0) &=& x_0,\ \ \ y(t)=Cx(t).\label{stabilization}
\end{eqnarray}
The goal is to stabilize the system in figure~\ref{fig:SchematicsStabilization}. 
\begin{figure}[htbp!]\linespread{1}
    \centering
    \begin{tikzpicture}
            \draw[very thick, ->, shorten >=1.5pt] (-4,1.25) -- (-3,1.25); 
            \draw[very thick, ->, shorten >=1.5pt] (-4.8,1.25) -- (-4,1.25); 
            \draw (-4,1.25)  node[fill = white, anchor=center, align=center]
            {\(\zeta(n)\)};
            \draw[ultra thick, fill=black!20] (-3,1.5) rectangle (-1,0.5);
            \draw (-2,1)  node[anchor=center] {\(\mA\)};
            \draw (-2,1.5)  node[anchor=south] {Plant};
            \draw[very thick, ->, shorten >=1.5pt, shorten <=1.5pt] (-1,1) -- (1,1); 
            \draw (0,1)  node[fill = white, anchor=center]
            {\(x(n)\)};
            \draw[ultra thick, fill=black!20] (3,1.5) rectangle (1,0.5);
            \draw (2,1)  node[anchor=center] {\(\mC\)};
            \draw (2,1.5)  node[anchor=south] {Sensor};
            
            \draw[very thick, <-, shorten >=1.5pt, shorten <=1.5pt] (4,-1) -- (4,1) -- (3,1); 
            \draw (4,0)  node[fill = white, anchor=center, align=center]
            {\(y(n)\)};
            \draw[ultra thick, fill=black!20] (4.5,-2) rectangle (2.5,-1);
            \draw (3.5,-1.5)  node[anchor=center] {\(\E\)};
            \draw (3.5,-2)  node[anchor=north] {Encoder};
            \draw[very thick, ->, shorten >=1.5pt, shorten <=1.5pt] (2.5,-1.5) -- (0.7,-1.5); 
            \draw (1.65,-1.5)  node[fill = white, anchor=center, align=center]
            {\(e(n)\)};
            \fill[fill = black!20] (-1,-1.5) -- (-0.7, -1.1) -- (1,-1.1) -- (0.7,-1.5) -- (1,-1.9) -- (-0.7,-1.9);
            \draw (0,-1.5)  node[anchor=center] {\(W\)};
            \draw (0,-2)  node[anchor=north] {DMC};
            \draw[very thick, ->, shorten >=1.5pt, shorten <=1.5pt] (-1,-1.5) -- (-2.5,-1.5); 
            \draw (-1.65,-1.5)  node[fill = white, anchor=center, align=center]
            {\(s(n)\)};
            \draw[ultra thick, fill=black!20] (-4.5,-2) rectangle (-2.5,-1);
            \draw (-3.5,-1.5)  node[anchor=center] {\(\D\)};
            \draw (-3.5,-2)  node[anchor=north] {Decoder/Estimator};
            \draw[very thick, ->, shorten >=1.5pt, shorten <=1.5pt] (-4,-1) -- (-4,0.75) -- (-3,0.75); 
            \draw (-4,0)  node[fill = white, anchor=center, align=center]
            {Control-\\link};
            \draw[dashed, very thick, ->, shorten >=1.5pt, shorten <=1.5pt] (-3,-1) -- (-3,-0.5) -- (3,-0.5) -- (3,-1); 
            \draw (0,-0.5)  node[fill = white, anchor=center, align=center]
            {Feedback-link};
    \end{tikzpicture}
    \caption{Remote stabilization for disturbed plants, with and without feedback-link.}
    \label{fig:SchematicsStabilization}
\end{figure}
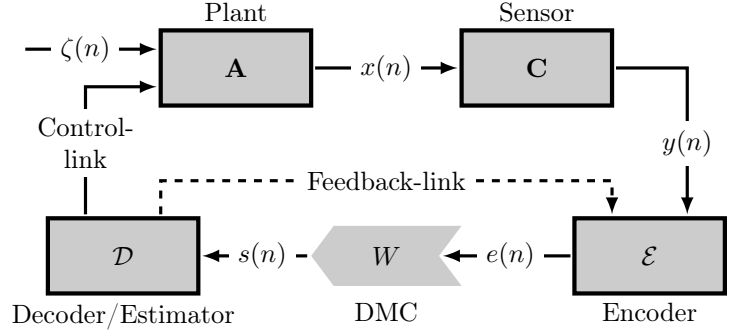

We are investigating a remote control setup. The coder emits a signal $e (t)$ into the channel. Depending on the received message $s(\theta)$ at time $t$, the decoder selects a current control: 
\be
u(t)=\U(t,s(0),s(1),\dots, s(t))
\ee

\begin{Definition}
We say that the coder and the decoder stabilize the noisy plant in \eqref{stabilization}
if
\be
\limsup_{t\to\infty} |x(t)|<\infty
\ee

\end{Definition}

Matveev and Savkin showed in \cite{MS07} the following results:
\begin{Lemma}[\cite{MS07}]\label{MS1a}
If $\eta(A)> C_0^{FB}(W)$ the stabilization error is almost sure unbounded
for any coder decoder pair:
\be
\limsup_{t\to\infty}|x(t)|=\infty\ \ \text{almost surely.}
\ee
\end{Lemma}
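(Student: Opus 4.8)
The plan is to prove the contrapositive by \emph{reducing the stabilization problem to the state estimation problem} and invoking part~1 of Lemma~\ref{MS1}. Concretely, I would show that any coder--decoder pair \((\E,\U)\) that stabilizes the controlled plant \eqref{stabilization} — whether or not it uses a feedback link — induces a coder--decoder--estimator for the observation system \eqref{system} whose estimation error equals, path by path, the norm of a genuine closed-loop state trajectory of \eqref{stabilization}. Since by hypothesis that trajectory stays bounded with probability one (indeed it would suffice that it be bounded with positive probability), the induced estimator would keep \(\limsup_{t\to\infty}|x(t)-\hx(t)|<\infty\) with positive probability, contradicting part~1 of Lemma~\ref{MS1}, which applies precisely because \(\eta(A)>C_0^{FB}(W)\). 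Hence no stabilizing pair can exist, i.e.\ \(\limsup_t|x(t)|=\infty\) almost surely for every coder--decoder pair.

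For the construction I would proceed as follows. For the estimation system \eqref{system} with state \(x(t)\) and output \(y(t)=Cx(t)\), let \(v(t)=\sum_{k=0}^{t-1}A^{\,t-1-k}Bu(k)\) be the accumulated control effect, so \(v(0)=0\) and \(v(t+1)=Av(t)+Bu(t)\). Set \(x_{\mathrm{ctrl}}(t):=x(t)+v(t)\); then \(x_{\mathrm{ctrl}}(0)=x_0\), \(x_{\mathrm{ctrl}}(t+1)=Ax_{\mathrm{ctrl}}(t)+Bu(t)+\zeta(t)\), and \(y_{\mathrm{ctrl}}(t):=Cx_{\mathrm{ctrl}}(t)=y(t)+Cv(t)\). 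The estimation encoder, which is given a feedback link and thus knows \(s(0),\dots,s(t-1)\) in addition to \(y(0),\dots,y(t)\), reconstructs the controls \(u(k)=\U(k,s(0),\dots,s(k))\) for \(k\le t-1\), forms \(v(0),\dots,v(t)\) hence \(y_{\mathrm{ctrl}}(0),\dots,y_{\mathrm{ctrl}}(t)\), and transmits \(\E\bigl(t,y_{\mathrm{ctrl}}(0),\dots,y_{\mathrm{ctrl}}(t),s(0),\dots,s(t-1)\bigr)\). On receiving \(s(0),\dots,s(t)\), the decoder recomputes \(u(t)=\U(t,s(0),\dots,s(t))\) and outputs \(\hx(t):=-v(t)\). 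By design the channel inputs, the received symbols, the controls and the recursion for \(x_{\mathrm{ctrl}}\) form exactly the closed loop of \eqref{stabilization} driven by the same \(x_0\) and the same disturbance sample path, so \(x_{\mathrm{ctrl}}\) has precisely the law of a stabilized trajectory, while \(x(t)-\hx(t)=x(t)+v(t)=x_{\mathrm{ctrl}}(t)\). Note that the induced estimator is forced to use the feedback link even when \((\E,\U)\) does not; this is exactly why the relevant threshold for stabilization is \(C_0^{FB}(W)\) regardless of whether a feedback link is present, and why one should not expect a \(C_0(W)\)-type statement here.

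The step to watch is the bookkeeping that makes the simulated \(x_{\mathrm{ctrl}}\) \emph{literally} a closed-loop trajectory of \eqref{stabilization} — not merely something with a similar distribution — which means matching the initial condition and the disturbance path and checking that the constructed encoder is causal and genuinely of the feedback form; once this is in place, ``\(\limsup<\infty\) with positive probability'' flatly contradicts ``\(\limsup=\infty\) almost surely'', the probability space being nontrivial since \(\zeta(t)\) has a density. I should also remark that the real content lies one level down, in part~1 of Lemma~\ref{MS1} itself: a proof not relying on the reduction would replace it with the volume/counting argument behind that lemma, namely that the unstable dynamics inflate phase-space volume by the factor \(2^{\eta(A)}\) per step — this is the meaning of \eqref{eta} — whereas over a block of \(n\) channel uses a zero-error feedback scheme can partition the decoder's uncertainty set into only \(2^{\,nC_0^{FB}(W)+o(n)}\) resolvable cells, so when \(\eta(A)>C_0^{FB}(W)\) the disturbance keeps refreshing more uncertainty than the channel can resolve, the uncertainty set cannot be confined to any fixed ball, and a Borel--Cantelli argument over successive blocks upgrades this to \(\limsup_t|x(t)|=\infty\) almost surely. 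That zero-error counting estimate with feedback is the genuine obstacle.
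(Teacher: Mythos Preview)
The paper does not prove this lemma at all: it is quoted verbatim from \cite{MS07} (Matveev--Savkin) as an external input, just like Lemma~\ref{MS1}, and no argument is supplied in the text. So there is no ``paper's own proof'' to compare against.

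That said, your reduction is the right idea and is essentially the route taken in the original source: one shows that a stabilizing coder--controller for \eqref{stabilization} can be turned into a feedback coder--estimator for \eqref{system} whose error equals the closed-loop state, and then invokes Lemma~\ref{MS1}(1). Your bookkeeping is sound: with \(v(t+1)=Av(t)+Bu(t)\), \(v(0)=0\), the process \(x_{\mathrm{ctrl}}(t)=x(t)+v(t)\) obeys \eqref{stabilization} pathwise with the same \(x_0\) and \(\zeta\); the encoder can causally reconstruct \(v(t)\) from \(s(0),\dots,s(t-1)\) via \(\U\), hence form \(y_{\mathrm{ctrl}}(\cdot)\) and feed it to \(\E\); and \(\hx(t)=-v(t)\) gives \(x(t)-\hx(t)=x_{\mathrm{ctrl}}(t)\). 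The only point worth tightening is the probabilistic coupling: you should state explicitly that the joint law of \((x_{\mathrm{ctrl}},e,s)\) produced by your simulation coincides with that of the genuine closed loop because the channel is memoryless and the inputs are identical measurable functions of \((x_0,\zeta,\text{channel noise})\); this is what lets you transfer ``\(\limsup|x_{\mathrm{ctrl}}(t)|<\infty\) with positive probability'' from the stabilization hypothesis. Your observation that the induced estimator is necessarily of feedback type---hence the threshold \(C_0^{FB}(W)\) irrespective of whether \((\E,\U)\) itself used feedback---is exactly the reason the lemma is stated only for \(C_0^{FB}\).
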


Because of Lemma~\ref{MS1} the inequality $\eta(A)\leq C_0^{FB}(W)$ is
necessary for existence of a coder decoder pair that stabilizes the system 
with a non-zero probability.
For the converse Matveev and Savkin showed the following:
\begin{Lemma}[\cite{MS07}]
If $\eta(A)<C_0^{FB}(W)$, the pairs $(A,C)$ and $(A,B)$ are detectabel and stabilizable,
respectively, and for a given $D$ $|\zeta(t)|\leq D<\infty$, then a properly designed
coder-decoder pair stabilzes the plant almost surely.
\end{Lemma}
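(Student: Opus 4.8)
The plan is to establish this sufficiency direction by an explicit quantizer/"zooming" construction layered on top of a zero-error feedback coding scheme, in the spirit of the networked-control paradigm (cf.\ \cite{MS07}). First I would reduce to the unstable part of the dynamics: using a real Jordan decomposition of $A$, write $\RR^n = E_s \oplus E_u$, where $E_s$ is spanned by the generalized eigenvectors with $|\lambda|<1$ and $E_u$ by those with $|\lambda|\ge 1$. On $E_s$ the closed-loop trajectory is automatically bounded because a stable linear recursion driven by the bounded input $\zeta$ (with $|\zeta(t)|\le D$) has bounded state, so it suffices to stabilize the $E_u$-component. Detectability of $(A,C)$ lets the encoder maintain an exact (or exponentially converging) estimate of $x(t)$ from $y(0),\dots,y(t)$ together with the controls it has issued, and stabilizability of $(A,B)$ guarantees that $Bu(t)$ can move every unstable mode; after a preliminary feedback transformation we may assume the unstable block is (block-)diagonal with $\eta(A)=\sum_{j:|\lambda_j|\ge1}\log_2|\lambda_j|$.

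Next I would extract the communication primitive from the hypothesis $C_0^{FB}(W)>\eta(A)$: by the definition of the zero-error feedback capacity there is a block length $n_0$ and a zero-error feedback coding scheme over $n_0$ channel uses reliably conveying one of $M$ messages with $\tfrac{1}{n_0}\log_2 M>\eta(A)$, i.e.\ $M>2^{n_0\eta(A)}=\prod_{j:|\lambda_j|\ge1}|\lambda_j|^{n_0}$. The role of feedback is exactly that this $M$ may exceed what is achievable without it; "zero error" means the decoder recovers the transmitted message \emph{always}, for every channel realization. I would then run the control loop in epochs of length $n_0$: inductively the encoder and decoder share a hyperrectangle $R_k\subseteq E_u$ known to contain the $E_u$-part of the state at epoch $k$; the encoder partitions $R_k$ into $M$ congruent sub-boxes, transmits the index of the one containing the state via the zero-error feedback code, the decoder recenters by choosing $u$ so that the predicted next state lies near the origin, and both update $R_{k+1}$ as the $n_0$-step image of the chosen sub-box inflated by the disturbance contribution over $n_0$ steps (a term bounded by a constant times $D$).

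The heart of the argument is the contraction estimate for the box sizes. Over one epoch the dynamics expand $R_k$ by the factor $\prod|\lambda_j|^{n_0}$ in volume (with, for genuine Jordan blocks, an extra polynomial-in-$n_0$ factor that is fixed once $n_0$ is), the quantization shrinks it by the factor $M$, and the disturbance adds a fixed bounded amount; since $M>\prod|\lambda_j|^{n_0}$ strictly, one chooses $n_0$ large enough that the per-direction contraction ratio $\rho<1$, whence $\mathrm{diam}(R_{k+1})\le \rho\,\mathrm{diam}(R_k)+c(D)$ and therefore $\mathrm{diam}(R_k)$ — and with it $|x(t)|$ — stays bounded by $c(D)/(1-\rho)$. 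Because the feedback code is zero-error there is never a decoding mismatch, so $R_k$ genuinely contains the state at every epoch and the bound holds along every sample path; the phrase "almost surely" merely absorbs the null event on which the $E_s$-estimator has not yet converged, or is inherited from the general stochastic formulation. I expect the main obstacle to be the bookkeeping that turns the single combinatorial inequality $M>\prod|\lambda_j|^{n_0}$ into a genuine per-coordinate contraction — choosing the sub-box geometry and the matching $n_0$ so that no direction, including those carrying complex eigenvalue pairs or nontrivial Jordan structure, is under-resolved — together with checking that the bounded disturbance really contributes only an additive $O(D)$ per epoch rather than an amplified one.
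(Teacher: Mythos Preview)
The paper does not prove this lemma at all: it is stated with the attribution \cite{MS07} and immediately used as a black box (``With the help of these results from Matveev and Savkin, we get the following \ldots''), so there is no in-paper argument to compare your proposal against.

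That said, your sketch is a faithful reconstruction of the architecture that Matveev and Savkin actually use: split off the stable subspace, exploit $C_0^{FB}(W)>\eta(A)$ to obtain a block-length-$n_0$ zero-error feedback code with $M>\prod_{|\lambda_j|\ge 1}|\lambda_j|^{n_0}$, run a quantize--transmit--recenter (``zooming'') loop in epochs, and close with the affine contraction $\mathrm{diam}(R_{k+1})\le \rho\,\mathrm{diam}(R_k)+c(D)$. Two places where your outline glosses over genuine work that the original paper has to do: (i) under mere detectability and with $\zeta\neq 0$ the encoder does \emph{not} have an exact state --- it only sees $y(t)=Cx(t)$ --- so one needs an observer whose error on the unstable component is itself confined to a box that can be folded into the quantizer budget, and the feedback link is what lets encoder and decoder agree on the applied $u(t)$; (ii) the passage from the single volumetric inequality $M>\prod|\lambda_j|^{n_0}$ to a \emph{per-direction} contraction with Jordan blocks and complex pairs is exactly the delicate allocation step you flag, and in \cite{MS07} it is handled by a nontrivial rate-splitting argument rather than a one-line choice of $n_0$. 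None of this invalidates your plan, but be aware that the ``bookkeeping'' you anticipate is where most of the length of the actual proof lives.
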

With the help of these results from Matveev and Savkin, we get the following:
\begin{Theorem}\label{T4}
The set 
\[
S^{FB}_{Stability}(possible)\subset \M^{n\times n}_c\times\CHc 
\]
is not semi-decidable for channels with feedback.
\end{Theorem}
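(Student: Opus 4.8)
The plan is to observe that Theorem~\ref{T4} is, at the level of the underlying set, the stabilization counterpart of Theorem~\ref{T2}: combining Lemma~\ref{MS1a} with its converse (and recalling that the detectability of $(A,C)$ and stabilizability of $(A,B)$, together with a deterministic bound on the disturbance, are standing assumptions), the pair $(A,W)$ admits a coder--decoder pair stabilizing the plant in \eqref{stabilization} precisely when $\eta(A)<C_0^{FB}(W)$, the boundary case $\eta(A)=C_0^{FB}(W)$ being irrelevant for what follows. Hence
\[
S^{FB}_{Stability}(possible)=\{(A,W)\in\M_c^{n\times n}\times\CHc : C_0^{FB}(W)>\eta(A)\},
\]
which is literally the set $S^{FB}_{SE}(solvable)$ of Theorem~\ref{T2}, so the proof given there transfers verbatim. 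I will nevertheless spell it out.

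Suppose, for contradiction, that a Turing machine $TM_{Stab}$ with a single halting state exists that halts on input $(A,W)\in\M_c^{n\times n}\times\CHc$ if and only if $(A,W)\in S^{FB}_{Stability}(possible)$. Fix an arbitrary $\mu\in\RR_c^+$ with $0<\mu<\log_2(\min\{|\X|,|\Y|\})$, set $\lambda=2^\mu\in\RR_c$, and let $A_\lambda\in\M_c^{n\times n}$ be a diagonal matrix one of whose diagonal entries equals $\lambda$ while all remaining diagonal entries have modulus strictly below $1$; then $A_\lambda$ describes an unstable system and $\eta(A_\lambda)=\mu$. Instantiating the first coordinate of $TM_{Stab}$ with the fixed matrix $A_\lambda$ produces a Turing machine which, on input $W\in\CHc$, halts if and only if $C_0^{FB}(W)>\eta(A_\lambda)=\mu$; consequently the set $\{W\in\CHc : C_0^{FB}(W)>\mu\}$ would be semi-decidable.

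This contradicts the result of \cite{BD20R}, by which $\{W\in\CHc : C_0^{FB}(W)>\mu\}$ is not semi-decidable for any $\mu$ with $0<\mu<\log_2(\min\{|\X|,|\Y|\})$, and the theorem follows. The only steps that are not a mere citation of Theorem~\ref{T2} are (i) the identification of the abstract ``stabilization possible'' set with the inequality $C_0^{FB}(W)>\eta(A)$, obtained from the two Matveev--Savkin lemmas above, and (ii) the fact that $\eta$ is Turing computable (Lemma~\ref{L2}), which guarantees that plugging a concrete $A_\lambda$ into $TM_{Stab}$ yields a genuine Turing machine in the variable $W$; neither of these presents any real obstacle, so the argument is essentially bookkeeping built on the hardness result for $C_0^{FB}$.
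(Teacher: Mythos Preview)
Your proposal is correct and follows exactly the route the paper takes: the paper's own proof consists of the single sentence ``The proof works like the proof for Theorem~\ref{T2},'' and you have spelled that argument out in full, including the identification of $S^{FB}_{Stability}(possible)$ with the set $\{(A,W):C_0^{FB}(W)>\eta(A)\}$ via the Matveev--Savkin lemmas and the reduction to the non-semi-decidability of $\{W:C_0^{FB}(W)>\mu\}$ from \cite{BD20R}. Your construction of $A_\lambda$ is in fact more carefully stated than the paper's version in the proof of Theorem~\ref{T1}.
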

\begin{proof}
The proof works like the proof for Theorem~\ref{T2}.
\end{proof}
Furthermore, we can show the following:
\begin{Theorem}\label{T5}
The set 
\[
S^{FB}_{Stability}(impossible)\subset \M^{n\times n}_c\times\CHc 
\]
is semi-decidable for channels with feedback.
\end{Theorem}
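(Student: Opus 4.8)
The plan is to mirror the proof of Theorem~\ref{T3}, since the obstruction to almost sure stabilization is governed by exactly the same inequality as the obstruction to state estimation with feedback. First I would record the characterization
\[
(A,W)\in S^{FB}_{Stability}(impossible)\ \Leftrightarrow\ C_0^{FB}(W)<\eta(A),
\]
which is simply the definition of the set together with Lemma~\ref{MS1a}: if $\eta(A)>C_0^{FB}(W)$ then no coder--decoder pair stabilizes the plant, so $(A,W)$ lies in the ``impossible'' regime; and the boundary/complementary cases are excluded by the strict inequality used to define the set.

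Next I would invoke the two computability ingredients that are already available. By Lemma~\ref{L2} the map $\eta:\M_c^{n\times n}\to\RR_c$ is Turing computable, so a Turing machine $TM_\eta$ turns a program describing $A$ into a program computing the real number $\eta(A)$. By Lemma~\ref{sharp} there is a Turing machine $TM_*$, recursive in both of its arguments, which on input $(\lambda,W)$ with $\lambda\in\RR_c^+$ and $W\in\CHc$ halts precisely when $C_0^{FB}(W)<\lambda$ and otherwise computes forever.

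The Turing machine for $S^{FB}_{Stability}(impossible)$ is then obtained by composition: on input $(A,W)$, first run $TM_\eta$ to obtain a program for $\eta(A)$, then run $TM_*$ on the pair $(\eta(A),W)$. The composite halts if and only if $C_0^{FB}(W)<\eta(A)$, i.e.\ if and only if $(A,W)\in S^{FB}_{Stability}(impossible)$; since for $A\in\M_c^{n\times n}$ the number $\eta(A)$ is a nonnegative computable real, the hypothesis of Lemma~\ref{sharp} is met (and in the degenerate case $\eta(A)=0$ the pair is never in the impossible set, which is exactly the behaviour of $TM_*$, since a zero-error capacity is never strictly negative). Because concatenation of Turing-computable maps with a semi-decision procedure is again a semi-decision procedure, the set is semi-decidable.

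The genuine difficulty has already been absorbed into Lemma~\ref{sharp} and the construction of the monotone effective approximation $F_N(W)\nearrow C_0^{FB}(W)$ from \cite{BD20R}: the point there is that one can interleave the evaluation of $F_N(W)-\lambda$ with a semi-decision procedure for strict negativity of a computable real, uniformly in $(W,\lambda)$. Given that lemma, the present statement is routine, and I anticipate no further obstacle; the only point that requires a line of care is the uniform (recursive-in-the-input) dependence of $TM_*$ on $\lambda$, which is precisely what Lemma~\ref{sharp} guarantees.
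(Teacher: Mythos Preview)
Your proposal is correct and follows exactly the route the paper intends: the paper's own proof of Theorem~\ref{T5} merely says that it works along the same lines as the proof of Theorem~\ref{T3}, and what you have written is precisely an explicit unfolding of that argument (characterize membership via $C_0^{FB}(W)<\eta(A)$, compute $\eta(A)$ by Lemma~\ref{L2}, and feed it into the Turing machine $TM_*$ of Lemma~\ref{sharp}). Your extra remark on the degenerate case $\eta(A)=0$ is a nice touch the paper does not spell out.
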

\begin{proof}
    Again, the proof works along the same lines as the proof for Theorem \ref{T3}.
\end{proof}

\section{Remote State Estimation and Stabilization for Undisturbed Plants}\label{Weak}
It is interesting that in the practically very interesting stability and state estimation quality conditions, the zero-error capacity plays a decisive role. Next we want to examine what happens when we demand weaker conditions regarding the quality of the stability and state estimation.

\begin{figure}[htbp!]\linespread{1}
    \centering
    \begin{tikzpicture}
            \draw[ultra thick, fill=black!20] (-3,1.5) rectangle (-1,0.5);
            \draw (-2,1)  node[anchor=center] {\(\mA\)};
            \draw (-2,1.5)  node[anchor=south] {Plant};
            \draw[very thick, ->, shorten >=1.5pt, shorten <=1.5pt] (-1,1) -- (1,1); 
            \draw (0,1)  node[fill = white, anchor=center]
            {\(x(n)\)};
            \draw[ultra thick, fill=black!20] (3,1.5) rectangle (1,0.5);
            \draw (2,1)  node[anchor=center] {\(\mC\)};
            \draw (2,1.5)  node[anchor=south] {Sensor};
            
            \draw[very thick, <-, shorten >=1.5pt, shorten <=1.5pt] (4,-1) -- (4,1) -- (3,1); 
            \draw (4,0)  node[fill = white, anchor=center, align=center]
            {\(y(n)\)};
            \draw[ultra thick, fill=black!20] (4.5,-2) rectangle (2.5,-1);
            \draw (3.5,-1.5)  node[anchor=center] {\(\E\)};
            \draw (3.5,-2)  node[anchor=north] {Encoder};
            \draw[very thick, ->, shorten >=1.5pt, shorten <=1.5pt] (2.5,-1.5) -- (0.7,-1.5); 
            \draw (1.65,-1.5)  node[fill = white, anchor=center, align=center]
            {\(e(n)\)};
            \fill[fill = black!20] (-1,-1.5) -- (-0.7, -1.1) -- (1,-1.1) -- (0.7,-1.5) -- (1,-1.9) -- (-0.7,-1.9);
            \draw (0,-1.5)  node[anchor=center] {\(W\)};
            \draw (0,-2)  node[anchor=north] {DMC};
            \draw[very thick, ->, shorten >=1.5pt, shorten <=1.5pt] (-1,-1.5) -- (-2.5,-1.5); 
            \draw (-1.65,-1.5)  node[fill = white, anchor=center, align=center]
            {\(s(n)\)};
            \draw[ultra thick, fill=black!20] (-4.5,-2) rectangle (-2.5,-1);
            \draw (-3.5,-1.5)  node[anchor=center] {\(\D\)};
            \draw (-3.5,-2)  node[anchor=north] {Decoder/Estimator};
            \draw[dashed, very thick, ->, shorten >=1.5pt, shorten <=1.5pt] (-4,-1) -- (-4,1) -- (-3,1); 
            \draw (-4,0)  node[fill = white, anchor=center, align=center]
            {Control-\\link};
            \draw[dashed, very thick, ->, shorten >=1.5pt, shorten <=1.5pt] (-3,-1) -- (-3,-0.5) -- (3,-0.5) -- (3,-1); 
            \draw (0,-0.5)  node[fill = white, anchor=center, align=center]
            {Feedback-link};
            \draw[dashed, very thick, ->, shorten >=1.5pt, shorten <=1.5pt] (-4.5,-1.5) -- (-5.5,-1.5) -- (-5.5,1);
            \draw (-5.5,0)  node[fill = white, anchor=center, align=center]
            {\(\hat{x}(n)\)};
    \end{tikzpicture}
    \caption{Remote state estimation and stabilization for undisturbed plants, with and without feedback-link}
    \label{fig:SchematicsUndisturbedPlant}
\end{figure}
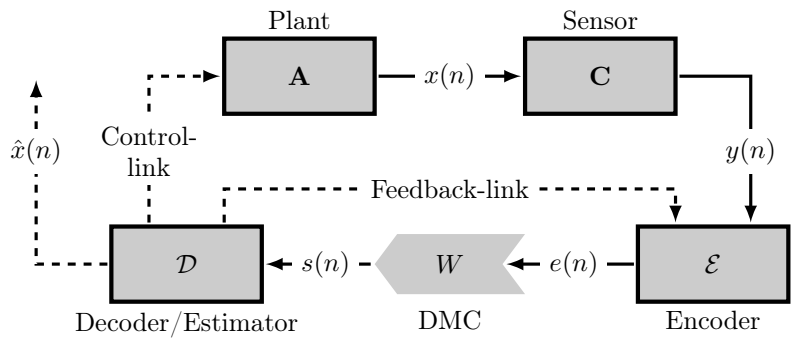

\begin{Remark}
For the not so strict error model for state estimation and stabilization, all questions can be solved algorithmically because only the classic Shannon capacity is of importance there.
\end{Remark}

We use the error model as in \cite{MS07SC}. We introduce the notations and results that we need from \cite{MS07SC}.

The consideration of noise-free plants is motivated by the objective of \cite{MS07SC}. In that paper the role of the Shannon capacity is highlighted. 
In the presence
of additive bounded disturbances, the border between the cases where the plant can
and cannot, respectively, be observed/stabilized with an almost sure bounded error is
constituted not by the ordinary $C(W)$ but the zero error capacity $C_0(W)$ of the channel, another
fundamental characteristic introduced by Shannon [55]. 
In particular, if $C_0 < \eta$ (see previous section), the
system affected by uniformly and arbitrarily small external disturbances can never be
observed/stabilized: the error is unbounded almost surely, irrespective of which causal
algorithm of observation/stabilization is employed. 

In this section, we pursue a more realistic objective of
detecting the unstable modes of the system and accept that an observer succeeds if
\be\label{2.5}
|x(t) -\hx(t)|\to 0\ as\ t\to \infty.
\ee
\begin{Definition} 
The coder-decoder pair is said to detect or track the state whenever
\eqref{2.5} is true and to keep the estimation error (or time-average error) bounded if
the following much weaker properties hold, respectively:
\be\label{2.6}
\limsup_{t\to\infty} |x(t) -\hx(t)| < \infty,\ \  \limsup_{t\to\infty} 
\frac 1t \sum_{\theta=0}^{t-1} |x(\theta) -\hx (\theta)| < \infty.
\ee
\end{Definition}

We consider Figure~\ref{fig:SchematicsUndisturbedPlant} and the model with and without feedback link.
The feedback enables the coder to be aware of the actions of
the decoder via duplicating the computations. This gives the
coder the ground to try to compensate for the previous channel errors. However, it
should be noted that Shannon showed in \cite{S56} that this feedback does not increase the rate at which the information
can be transmitted across the channel with as small a probability of error as desired. At the same time, it may increase the rate at which information can be
transmitted with the zero probability of error \cite{S56}. The feedback may also increase
the reliability function and simplify coding and decoding operations. All this can be found in the survey \cite{V98}.
The role of communication feedback in control and state estimation was discussed in
\cite{TM04, TM04b}.
The information received by the decoder is limited to a finite number of bits at
any time. So the decoder is hardly able to restore the state with the infinite exactness
$\hx(t) = x(t)$ for a finite time. In \cite{MS07SC}, the authors pursue a more realistic objective of
detecting the unstable modes of the system and accept that an observer succeeds if
\eqref{2.5} is fulfilled.
We consider the unstable discrete-time invariant linear plants of the form
\be\label{2.1}
x(t+1)=Ax(t);\ \ x(0)=x_0,\ \ y(t)=Cx(t)
\ee
The decoder is defined by an equation of the form
\be\label{2.2}
\hx(t)=\X(t,s(0),s(1),\dots,s(t)).
\ee
The coder with feedback is given by the following equation:
\be\label{2.3}
e(t)=\E(t,y(0),\dots,y(t),s(0),\dots,s(t-1))
\ee
The coder without feedback is given by the following equation:
\be\label{2.4}
e(t)=\E(t,y(0),\dots,y(t))
\ee
In this section we accept that an observer succeeds (see also \eqref{2.5}), if
\be
|x(t)-\hx(t)|\to 0\ as\ t\to\infty
\ee

\begin{Definition}\label{D3.1}
A coder-decoder pair is said to stabilize the system if
\be\label{3.3}
|x(t)| \to 0\ \ \text{and}\ \  |u(t)|\to 0\ as\ t\to\infty
\ee
and to keep the stabilization error (or time-average error) bounded if the much weaker
properties hold, respectively:
\be\label{3.4} 
\limsup_{t\to\infty} |x(t)| < \infty,\ \ \limsup_{t\to\infty} \frac 1t
\sum_{\theta =0}^{t-1} |x(\theta)| < \infty.
\ee
\end{Definition}
The problem is to bound the data rate of the channel above which there exists a
stabilizing coder-decoder pair. In this case ``stabilizing'' means either 
``stabilizing almost surely'' or ``stabilizing with as large a probability as desired.''

The following assumptions are made in this section:
\begin{Assumptions}\label{assumptions2}
\begin{enumerate}
    \item The coder sends signals to the decoder over a given stationary
discrete noisy memoryless channel \cite{S48}. 
\item The plant does not affect the operation of the channel: given
an input $e(t)$, the output $s(t)$ is statistically independent of the initial state $x_0$.
\item  The initial state $x_0$ has a probability density $p_0(x)$.
\item  The pair $(A,C)$ is detectable.
When dealing with the stabilization problem, we impose one more assumption.
\item The pair $(A,B)$ is stabilizable.
\end{enumerate}
\end{Assumptions}

For the detection problem
in \cite{MS07SC} the authors give the following:
\begin{Lemma}
Suppose that Assumptions~\ref{assumptions2} hold. Denote by $\lambda_1,\dots, \lambda_n$ the
eigenvalues of the system \eqref{2.1} repeating in accordance with their algebraic multiplicities and by $C(W)$ the capacity of the DMC. 
For the following 4 statements:
\begin{itemize}
    \item[A1] There exists a coder-decoder pair with feedback \eqref{2.2}, \eqref{2.3} that detects the state (\eqref{2.5} holds) almost surely.
    \item[A2] For arbitrary probability value $p\in(0,1)$, there exists a coder-decoder pair without feedback \eqref{2.2}, \eqref{2.4} that detects the state with the probability $p$ or better.
    \item[B] For any probability value $p\in (0,1)$, there exists a coder decoder pair with a feedback \eqref{2.2}, \eqref{2.3} that keeps the estimation error bounded with probability $p$ or better.
    \item[C] There exists a coder-decoder pair with a feedback \eqref{2.2}, \eqref{2.3} that keeps the
estimation time-average error bounded with a nonzero probability.
\end{itemize}
the following implications apply:
\[
C(W)>\eta(A)\Rightarrow A1 \Rightarrow B \Rightarrow C \Rightarrow C(W)\geq \eta(A)
\]
and 
\[
C(W)>\eta(A)\Rightarrow A2 \Rightarrow B \Rightarrow C \Rightarrow C(W)\geq \eta(A)
\]
\end{Lemma}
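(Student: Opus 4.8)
The plan is to break the two implication chains into their three ``soft'' links $A1\Rightarrow B$, $A2\Rightarrow B$, $B\Rightarrow C$, which are essentially definitional, and the genuinely information-theoretic endpoints: the achievability directions $C(W)>\eta(A)\Rightarrow A1$ and $C(W)>\eta(A)\Rightarrow A2$, and the converse $C\Rightarrow C(W)\geq\eta(A)$. Establishing these three endpoints, together with the soft links, yields both displayed chains.

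For the soft links I would argue as follows. If a coder--decoder pair detects the state almost surely, i.e.\ $|x(t)-\hx(t)|\to0$ almost surely, then $\limsup_{t\to\infty}|x(t)-\hx(t)|<\infty$ almost surely, so the estimation error is bounded with probability $1$, hence with probability $p$ or better for every $p\in(0,1)$; this is exactly $B$, and the identical observation turns $A2$ into $B$. For $B\Rightarrow C$ one uses the elementary fact that if a nonnegative real sequence $(a_\theta)$ has $\limsup_\theta a_\theta<\infty$ then $\limsup_t\frac1t\sum_{\theta=0}^{t-1}a_\theta<\infty$; applying this pathwise with $a_\theta=|x(\theta)-\hx(\theta)|$ shows that keeping the estimation error bounded with probability $p$ keeps the time-average error bounded with the same probability, hence in particular with a nonzero probability.

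For achievability I would fix a rate $R$ with $\eta(A)<R<C(W)$ and invoke Shannon's channel coding theorem to obtain $(R,n)$-codes with $P_{\max}$ as small as desired. Since the plant is undisturbed, $x(t)=A^tx_0$ is a deterministic function of $x_0$, so it suffices for the decoder to learn the component of $x_0$ in the unstable generalized eigenspace; the unstable dynamics inflate the relevant region by a factor $2^{\eta(A)}$ per step, so a ``zoom-in'' scheme that transmits $nR$ fresh bits refining a quantization of the unstable state every $n$ steps contracts the residual error by a factor $2^{n(\eta(A)-R)}<1$ in each successful block while a failed block expands it by $2^{n\eta(A)}$. With feedback, the coder reconstructs the decoder's current estimate from the channel outputs it also observes and re-encodes the exact residual each block; choosing $n$ large enough that the per-block failure probability $\varepsilon$ satisfies $\varepsilon<1-\eta(A)/R$, the law of large numbers applied to the fraction of failed blocks drives $|x(t)-\hx(t)|\to0$ almost surely, giving $A1$. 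Without feedback, given $p$ I would instead let the block lengths grow fast enough that the per-block failure probabilities sum to at most $1-p$; a union bound over the countably many blocks guarantees that with probability at least $p$ no block ever fails, and then the zoom-in converges, yielding detection with probability $p$, i.e.\ $A2$.

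The main obstacle is the converse $C\Rightarrow C(W)\geq\eta(A)$, which I would prove by contradiction via a volume/mutual-information argument. Suppose a coder--decoder pair with feedback keeps the time-average estimation error at most $M<\infty$ with probability $q>0$, while $C(W)<\eta(A)$. The estimate $\hx(t)$ is a deterministic function of $(s(0),\dots,s(t))$, and by the data-processing inequality together with Shannon's result that feedback does not increase capacity, $I\big(x_0;s(0),\dots,s(t)\big)\leq(t+1)\,C(W)$. Conversely, on the favourable event a positive density of times $t$ satisfy $|x(t)-\hx(t)|\leq2M/q$ (Markov's inequality on the running average), and at such a time $\hx(t)$ pulled back through the expanding unstable dynamics localizes the unstable component of $x_0$ inside a set whose volume is $O(2^{-t\eta(A)})$ relative to the positive-volume support supplied by the density $p_0$. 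A rate-distortion / Fano-type estimate converts this localization into the lower bound $I\big(x_0;s(0),\dots,s(t)\big)\geq t\,\eta(A)-O(1)$, and combining the two bounds and letting $t\to\infty$ forces $C(W)\geq\eta(A)$, contradicting the assumption. The delicate points are handling the time-averaging and the restriction to a probability-$q$ event so that the favourable instants have positive density and the entropy bookkeeping loses only additive constants, and isolating the unstable subspace so that its volume-expansion rate is exactly $2^{\eta(A)}$.
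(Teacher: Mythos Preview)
The paper does not prove this lemma. It is stated there as a quotation of results from Matveev and Savkin (reference \cite{MS07SC} in the paper), introduced by ``For the detection problem in \cite{MS07SC} the authors give the following,'' and no proof is supplied; the lemma is used purely as an input to the paper's own semi-decidability theorem in Section~\ref{Weak}. So there is no ``paper's own proof'' to compare against.

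That said, your sketch is broadly along the lines of how such results are established in the Matveev--Savkin literature. A few remarks on the soft links and the endpoints. For $A2\Rightarrow B$ you should make explicit that a coder \emph{without} feedback is a special case of a coder \emph{with} feedback (simply ignore the feedback), since $A2$ concerns feedback-free coders while $B$ concerns coders with feedback; your ``identical observation'' glosses over this change of class. Your achievability argument for $A1$ via a zoom-in quantizer combined with the strong law on block failures is the standard idea; for $A2$ the usual construction does not rely on a union bound over growing block lengths but rather on a fixed-blocklength code together with an outer error-detection/retransmission layer implemented \emph{without} feedback via time-sharing or repetition, so your route would work but is not the canonical one. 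For the converse, the volume/mutual-information outline is correct in spirit, but two points deserve care: first, the data-processing bound $I(x_0;s^t)\le (t{+}1)C(W)$ requires that the channel inputs $e(\theta)$ depend on $x_0$ only through the past channel outputs and the observations, and that the channel is memoryless conditional on its input---both are granted by Assumptions~\ref{assumptions2}, but the feedback case needs the chain-rule decomposition rather than a direct product bound; second, extracting a lower bound on $I(x_0;s^t)$ from a \emph{time-average} error guarantee that holds only on an event of probability $q>0$ requires conditioning on that event and controlling the resulting change in the density of $x_0$, which is where the assumption that $x_0$ has a density $p_0$ is actually used. These are exactly the ``delicate points'' you flag, and they are the substance of the original proof.
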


Now we consider the stabilization problem with the controlled version of the unstable plant:
\be\label{3.1}
x(t+1)=Ax(t)+Bu(t);\ \ x(0)=x_0,\ \ y(t)=Cx(t)
\ee
The decoder selects a control $u(t)$:
\be\label{3.2}
u(t)=\U(t,s(0),s(1),\dots,s(t)).
\ee
The coder stabilize the system if (see Definition~\ref{D3.1}):
\be
|x(t)|\to 0\ \ \text{and}\ \ |u(t)|\to 0\ \ as\ \ t\to\infty
\ee
The stabilization error is bounded, if 
\be\label{S3.4}
\limsup_{t\to\infty} |x(t)| < \infty,\ \ \limsup_{t\to\infty} \frac 1t
\sum_{\theta =0}^{t-1} |x(\theta)| < \infty.
\ee

For this weaker stabilization problem it is shown in \cite{MS07SC}:
\begin{Lemma}[\cite{MS07SC}]
Suppose that Assumptions~\ref{assumptions2} hold. Denote by $\lambda_1,\dots, \lambda_n$ the
eigenvalues of the system \eqref{2.1} repeating in accordance with their algebraic multiplicities and by $C(W)$ the capacity of the DMC. 
For the following 4 statements:
\begin{itemize}
    \item[A] There exists a coder without a communication feedback \eqref{2.4} and a decoder \eqref{3.2} that stabilize the system almost surely.
    \item[B] There exists a coder with a communication feedback \eqref{2.3} and a decoder \eqref{3.2} that stabilize the system almost surely.
    \item[C] For arbitrarily probability value $p \in (0, 1)$, there exists a coder with a
communication feedback \eqref{2.3} and a decoder \eqref{3.2} that stabilize the system with
the probability $p$ or better.
    \item[D] There exists a coder with a communication feedback \eqref{2.3} and a decoder \eqref{3.2}
that keep the time-average stabilization error bounded with a nonzero probability.
\end{itemize}
the following implication applies:
\[
C(W)>\eta(A)\Rightarrow A \Rightarrow B \Rightarrow C \Rightarrow D \Rightarrow C(W)\geq \eta(A)
\]
\end{Lemma}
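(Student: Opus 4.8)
The assertion is the chain $C(W)>\eta(A)\Rightarrow\mathrm A\Rightarrow\mathrm B\Rightarrow\mathrm C\Rightarrow\mathrm D\Rightarrow C(W)\ge\eta(A)$, and the plan is to dispatch the three interior links as soft monotonicity observations and to concentrate the real work on the two outer, information-theoretic links. For $\mathrm A\Rightarrow\mathrm B$, a coder without feedback \eqref{2.4} is nothing but a coder with feedback \eqref{2.3} whose encoding map ignores the returned symbols $s(0),\dots,s(t-1)$, so any pair realizing A also realizes B. For $\mathrm B\Rightarrow\mathrm C$, almost-sure stabilization is stabilization with probability $1\ge p$ for every $p\in(0,1)$. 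For $\mathrm C\Rightarrow\mathrm D$, fix $p=\tfrac12$ and take the corresponding pair from C; on the probability-$\ge\tfrac12$ event on which $|x(t)|\to0$, the sequence $(|x(t)|)_{t}$ is bounded and its Cesàro means converge to $0$, so both requirements in \eqref{S3.4} hold and D follows with nonzero probability.

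\textbf{Achievability} $\bigl(C(W)>\eta(A)\Rightarrow\mathrm A\bigr)$. Since the plant \eqref{2.1} carries no disturbance, the entire trajectory is the deterministic function $x(t)=A^{t}x_0$ of the initial state, and by detectability of $(A,C)$ (Assumptions~\ref{assumptions2}) the unstable generalized eigenspace is reconstructible from the outputs $y(t)=Cx(t)$; hence it suffices to convey the projection of $x_0$ onto that eigenspace with geometrically improving accuracy. I would quantize this projection on a nested family of grids of mesh $\sim 2^{-t\eta(A)}$ --- the exponent being forced because the product of the eigenvalues with $|\lambda_j|\ge1$ has modulus $2^{\eta(A)}$ --- and transmit the successive refinements through a concatenation of channel block codes of lengths $n_k\uparrow\infty$, with per-block rate kept strictly below $C(W)$ and block error probabilities $\varepsilon_k$ chosen summable (possible precisely because $C(W)>\eta(A)$ yields a positive reliability exponent). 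By the Borel--Cantelli lemma only finitely many blocks are mis-decoded almost surely; after the last such error the decoder's estimate of the unstable part of $x_0$ is correct up to the current mesh, so $|x(t)-\hx(t)|\to0$ a.s., and feeding $\hx(t)$ into a certainty-equivalence controller built from the stabilizability of $(A,B)$ yields $|x(t)|\to0$ and $|u(t)|\to0$ a.s., which is statement A.

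\textbf{Converse} $\bigl(\mathrm D\Rightarrow C(W)\ge\eta(A)\bigr)$. Fix a coder--decoder pair realizing D and let $E$ be the event of positive probability on which $\limsup_{t}\tfrac1t\sum_{\theta<t}|x(\theta)|\le R<\infty$; since $x_0$ has a density, $E$ corresponds to a set of initial states of positive Lebesgue measure. Each control $u(\theta)=\U(\theta,s(0),\dots,s(\theta))$ is a deterministic function of the length-$\theta$ channel output word, and by the converse to the channel coding theorem the channel can effectively single out at most $2^{t(C(W)+o(1))}$ output histories over $t$ uses; each produces one control history, hence one ``tube''
\[
\Bigl\{\,x_0\ :\ \tfrac1t\textstyle\sum_{\theta=0}^{t-1}\bigl|A^{\theta}x_0+\sum_{\sigma=0}^{\theta-1}A^{\theta-1-\sigma}Bu(\sigma)\bigr|\le R\,\Bigr\},
\]
whose Lebesgue measure along the unstable directions decays essentially like $2^{-t\eta(A)}$. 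For the union of these tubes to keep covering a fixed positive-measure set of $x_0$ as $t\to\infty$ one needs $2^{t(C(W)+o(1))}\cdot2^{-t\eta(A)}$ bounded away from $0$, i.e.\ $C(W)\ge\eta(A)$. Equivalently one may argue through mutual information: $I(x_0;s^{t})\le t\,C(W)$ by data processing and the per-letter capacity bound (which survives feedback if one uses directed information), while the concentration of $x_0$ given $s^{t}$ forces $I(x_0;s^{t})\ge t\,\eta(A)-O(1)$.

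The step I expect to be the genuine obstacle is exactly this converse, because it has to be carried out under the weakest hypothesis D: one is given only a \emph{time-average} bound, and only \emph{with nonzero} (not full) probability. A naive single-time application of Markov's inequality extracts only one instant $\theta\ge t/2$ with $|x(\theta)|$ controlled, which loses a constant factor in the exponent and merely yields $C(W)\ge\eta(A)/2$; to recover the sharp constant one must exploit the cumulative structure of the recursion \eqref{3.1} across many times, combine it with Fano's inequality and the asymptotic equipartition property to discard atypical channel outputs, and observe that the stable modes and the term $\sum A^{\theta-1-\sigma}Bu(\sigma)$ only translate the tubes without enlarging them and so drop out of the volume count. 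Arranging these estimates so that it is precisely the exponent $C(W)-\eta(A)$ that governs whether the covering can persist is the crux of the argument.
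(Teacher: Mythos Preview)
The paper does not prove this lemma at all: it is quoted verbatim from \cite{MS07SC} (note the attribution in the lemma header and the phrase ``it is shown in \cite{MS07SC}'' immediately preceding it), and the authors simply use it as an input to the semi-decidability theorem that follows. So there is no ``paper's own proof'' to compare your attempt against.

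That said, your sketch is a reasonable outline of how the Matveev--Savkin argument actually runs. The three interior implications are indeed soft monotonicity, the achievability direction is the standard block-coding-plus-Borel--Cantelli construction combined with certainty-equivalence control, and you have correctly identified the converse under hypothesis~D (time-average bound, nonzero probability) as the place where the real analytic work lives. Your own caveat about the naive Markov-inequality extraction losing a factor of~$2$ in the exponent is exactly right, and the remedy you gesture at --- exploiting the full recursion across many times together with a Fano/volume count on the unstable subspace --- is the route taken in \cite{MS07SC}. If you want to turn this into a self-contained proof you would need to make that converse rigorous; as written it is a correct roadmap but not a proof.
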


Based on these results from \cite{MS07} we consider the sets 
\[
S_>=\{(A,W)\in\M^{n\times n}_c\times \CHc : \eta(A)-C(W)>0\}
\]
and 
\[
S_<=\{(A,W)\in\M^{n\times n}_c\times \CHc : \eta(A)-C(W)<0\}
\]
For these sets, the following applies:
\begin{Theorem}
Let $\X,\Y$ be finite alphabets and $n$ fixed, then the sets
$S_>$ and $S_<$ are semi-decidable.
\end{Theorem}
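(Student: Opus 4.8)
The plan is to reduce both membership problems to the (semi-)decidability of sign tests for computable reals, exploiting that the defining quantity $\eta(A)-C(W)$ depends Turing-computably on the input $(A,W)$. Concretely, I would first recall from Lemma~\ref{L2} that $\eta:\M_c^{n\times n}\to\RR_c$ is Turing computable, so that from a program describing $A$ one effectively obtains a program computing a fast-converging rational Cauchy representation of $\eta(A)$. The second ingredient is the uniform Turing computability of the ordinary Shannon capacity: the map $C:\CHc\to\RR_c$, $C(W)=\max_{P\in\P(\X)} I(P;W)$, is Borel--Turing computable, since the mutual information $I(P;W)$ is a computable function of $(P,W)$ admitting a computable modulus of continuity, and the maximization runs over the standard simplex, which is a computably compact set with effectively constructible finite $\varepsilon$-nets; hence $C(W)$ can be approximated to within $2^{-k}$ effectively and uniformly in $W$ (one may alternatively invoke the Blahut--Arimoto iteration together with an explicit a posteriori error bound).

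Combining these, the map $(A,W)\mapsto \xi(A,W):=\eta(A)-C(W)$ from $\M_c^{n\times n}\times\CHc$ to $\RR_c$ is Turing computable, since differences and concatenations of Turing-computable maps are again Turing computable. Thus from a description of $(A,W)$ one obtains a computable sequence of rationals $\{r_k\}_{k\in\NN}$ with $|\xi(A,W)-r_k|<2^{-k}$ for every $k$. To semi-decide $S_>$, I would build a Turing machine that, on input $(A,W)$, computes $r_1,r_2,\dots$ in turn and halts as soon as it finds some $k$ with $r_k>2^{-k}$. If $\xi(A,W)>0$, then for $k$ large enough $2^{-k}<\xi(A,W)/2$, so $r_k>\xi(A,W)-2^{-k}>2^{-k}$ and the machine halts; conversely, if it halts at stage $k$ then $\xi(A,W)>r_k-2^{-k}>0$. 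Hence it halts precisely on $S_>$. The set $S_<$ is handled symmetrically, halting as soon as $r_k<-2^{-k}$, which occurs if and only if $\xi(A,W)<0$.

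The only non-formal step is the uniform Turing computability of $W\mapsto C(W)$ on $\CHc$; everything else is bookkeeping with fast-converging Cauchy representations and the standard observation that $\{\xi\in\RR_c:\xi>0\}$ and $\{\xi\in\RR_c:\xi<0\}$ are semi-decidable. I expect this capacity-computability statement to be the main point to pin down (it is classical, and can be referenced to the computable-analysis literature already cited), and it is precisely the property that distinguishes the present weaker error model from the zero-error setting of Sections~\ref{State}--\ref{Stab}: there, $C_0(W)$ (and, for the $S_<$-type sets without feedback, even its approximability from above) is exactly what is not known to be computable, which is why the analogous semi-decidability can be established here but not there.
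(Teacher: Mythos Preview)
Your proposal is correct and follows essentially the same route as the paper: use Lemma~\ref{L2} for the Turing computability of $\eta$, the Turing computability of the Shannon capacity $C$ (the paper cites \cite{BSBP19} rather than sketching the compactness/Blahut--Arimoto argument), and then the semi-decidability of the sign of the computable real $\xi=\eta(A)-C(W)$. Your version is more explicit about the semi-decision procedure (the $r_k\gtrless \pm 2^{-k}$ test) where the paper simply invokes a machine $TM_{>0}$, but the underlying argument is identical.
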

\begin{proof}
Lemma~\ref{L2} shows that $\eta$ is Turing computable. Because of \cite{BSBP19} the 
Shannon capacity $C$ is also Turing computable. The sets $\{x\in\RR_c: x<0\}$
and $\{x\in\RR_c: x>0\}$ are semi-decidable.
We now want to show the semi-decidability of $S_>$. Let $(A, W)$ be corresponding inputs. We calculate $\eta(A)$ and $C(W)$ with the Turing machines $TM_\eta$ and $TM_C$. These provide the computable numbers $\eta(A)$ and $C(W)$, that is, there is an algorithm for $TM_\eta$ and $TM_C$ to compute these numbers. These are input for the Turing machine $TM_{>0}$, which stops at the input $\xi=\eta(A)-C(W)$ (this is a computable number and we have an algorithm for it) exactly when $\xi>0$ applies. With this we have proven the statement of the theorem for $S_>$. The proof for $S_<$ is the same.
\end{proof}

\section{Conclusions}\label{Conclusions}
The result we gave in this paper are applications of the techniques and theories of the work on the reliability function \cite{BD20R} and on the zero-error capacity for noisy channels \cite{BD20Z} that highlight the boundaries of computer-aided design and autonomous systems. The occurrence of zero-error capacity with and without feedback in control theory is very exciting. 

\section*{Acknowledgments}

Many thanks to Moritz Wiese, who drew our attention to the works of Matveev and Savkin.

We thank the German Research Foundation (DFG) within the Gottfried Wilhelm Leibniz Prize under Grant BO 1734/20-1 for their support of H. Boche.

Further, we thank the German Research Foundation (DFG) within Germany’s Excellence Strategy EXC-2111—390814868 for their support of H. Boche.

Thanks also go to the German Federal Ministry of Education and Research (BMBF) within the national initiative for “Post Shannon Communication (NewCom)” with the project “Basics, simulation and demonstration for new communication models” under Grant 16KIS1003K for their support of H. Boche and with the project “Coding theory and coding methods for new communication models” under Grant 16KIS1005 for their support of C. Deppe.

\section*{References}

\bibliography{references}

\end{document}